\newtheorem{dummy}{anything}[section]
\newtheorem{theorem}[dummy]{Theorem}
\newtheorem{lemma}[dummy]{Lemma}
\newtheorem{proposition}[dummy]{Proposition}
\newtheorem{corollary}[dummy]{Corollary}
\theoremstyle{definition}
\newtheorem{definition}[dummy]{Definition}
\newtheorem{example}[dummy]{Example}
\newtheorem{remark}[dummy]{Remark}
\newtheorem{problem}[dummy]{Problem}
\newcommand
{\eqncount}{\setcounter{equation}{\value{dummy}}%
\addtocounter{dummy}{1}}
\newcommand{\bZ}{\mathbb Z}
\newcommand{\La}{\Lambda}
\newcommand{\Ga}{\Gamma}
\newcommand{\mmatrix}[4]{\left (\vcenter
{\xymatrix@C-2pc@R-2pc{#1&#2\\#3&#4} } \right )}
\DeclareMathOperator{\Hom}{Hom}                                                                     
                                                                \DeclareMathOperator{\im}{Im}
                                                                    \DeclareMathOperator{\Ker}{Ker}
                                                                   \DeclareMathOperator{\pt}{pt}
                                                             \DeclareMathOperator{\Kernel}{Kernel}
\DeclareMathOperator{\hepta}{aut}
\DeclareMathOperator{\id}{Id}                                                                              \DeclareMathOperator{\coker}{coker}
                                                                           \DeclareMathOperator{\Rad}{Rad}
\begin{document}
 \title[Homotopy classification of $PD_4$-complexes relative an order relation]
{Homotopy classification of $PD_4$-complexes relative an order relation}
\author{Friedrich Hegenbarth, Mehmetc\.{i}k Pamuk and Du\v{s}an Repov\v{s}}
\subjclass[2010]{Primary: 57P10; Secondary: 55N22, 55S35, 55S45}
\keywords{Minimal $PD_4$-complex, order relation, symmetric $L$-group.}
\address{Department of Mathematics
\newline\indent
University of Milano
\newline\indent
Milano, Italy} \email{friedrich.hegenbarth{@}unimi.it} 

\address{Department of Mathematics
\newline\indent
Middle East Technical University
\newline\indent
Ankara 06531, Turkey} \email{mpamuk{@}metu.edu.tr}

\address{Faculty of Education and Faculty of Mathematics and Physics
\newline\indent
University of Ljubljana
\newline\indent
Ljubljana 1000, Slovenia} \email{dusan.repovs{@}guest.arnes.si}

\date{\today}
\begin{abstract}
\noindent
We define an order relation among oriented $PD_4$-complexes.  We show that with respect to this 
relation, two $PD_4$-complexes over the same complex are homotopy equivalent if and only if there 
is an isometry between the second homology groups.  We also consider minimal objects of this relation.
\end{abstract}

\maketitle
\section{INTRODUCTION}

Let $X$, $P$ be  compact oriented $PD_4$-complexes and  $[X]\in H_4(X; \bZ)$,  $[P]\in H_4(P; \bZ) $ be their
fundamental classes, respectively.  We are going to use the notation $X \succ P$ if there is a continuous map
$f\colon X \to P$ such that

\begin{itemize}
\item[(1)]$f_*[X]=[P]$, i.e., $f$ has degree $1$,
\item[(2)]$f_* \colon \pi_1(X) \to \pi_1(P)$ is an isomorphism.
\end{itemize}
In this case, we shall say that $f$ realizes $X \succ P$.  Note that $f$ is not unique with respect to the properties $(1)$ and $(2)$,
i.e., there could exist a map $g\colon X \to P$ satisfying $(1)$ and  $(2)$, but not homotopic to $f$.

Let $X$, $X'$ and $P$ be $PD_4$-complexes such that $X\succ P$ and  $X'\succ P$ are realized by $f\colon X\to P$ and 
$f'\colon X'\to P$.  One of the main questions that we want to address in this paper is,  when are $X$ and $X'$ homotopy equivalent 
over $P$?  We show that $X$ and $X'$  are homotopy equivalent over $P$ if and only if there is an isometry between the second 
homology groups (Theorem \ref{main}). 

\begin{remark}
Throughout the paper $\pi$ will denote the fundamental group $\pi_1(X)$.  Also note that for a $PD_4$-complex $X$, the integral 
group ring $\La:=\bZ\pi$ has an involution defined on it.  Every right(left) $\La$-module can be considered as a left(right) 
$\La$-module with the conjugate structure given by this involution.  Throughout this paper the functors $\otimes_{\La}$ and   
$\Hom_{\La}$ are defined using this fact.
\end{remark}

Starting with a $PD_4$-complex $X$, we also define a minimal $PD_4$-complex $P$ for $X$, called $X$-minimal, which is minimal with 
respect to the order relation $\succ$  (see Definition \ref{minimal}).  Minimal  $PD_4$-complexes are also considered by 
Hillman(\cite{hillman 06,hillman 09,hillman 13}) with special emphasis on a particular  type of minimal $PD_4$-complex, 
called a strongly minimal $PD_4$-complex.  
Recall that for a $PD_4$-complex $X$, the radical of the intersection form $\lambda_X$, denoted by 
$\Rad(\lambda_{X})$, is  isomorphic to the module $H^2(\pi; \La)$. 

\begin{definition} 
A $PD_4$-complex $P$ is said to be strongly minimal if $$H_2(P; \bZ[\pi_1(P)])/ \Rad(\lambda_{P})=0.$$
\end{definition}

\begin{remark}
Obviously, if $P$ is strongly minimal and $X \succ P$, then $P$ is $X$-minimal.
\end{remark}

These two notions of minimality coincide whenever  the cohomological dimension of the fundamental group is less than or equal to $2$ 
(see for example \cite[Theorem 25]{hillman 13}).  All known examples of strongly minimal models are $PD_4$-complexes with such
fundamental groups (\cite{hillman 06,hillman 09,hillman 13}).  Therefore one might 
consider the following natural question:

\begin{problem}
Find examples of (strongly) minimal $PD_4$-complexes whose fundamental group has cohomological dimension greater than $2$.
\end{problem}

Hillman \cite{hillman 06,hillman 13} gives a homotopy classification for $PD_4$-complexes over the strongly minimal models 
subject to a $k$-invariant constraint.  He considers the same obstruction as in the proof of our main result Theorem \ref{main}.
However, we point out that our method in this paper is different: to see that the obstruction vanishes Hillman realizes 
it by a self-equivalence, whereas we use a map $A'$ to relate the obstruction to intersection forms and cap products.  We also remove the 
hypothesis on the $k$-invariant.
  
The outline of the paper is as follows:  In Section two we list some of the immediate properties of the order relation 
$\succ$.  In Section three, for a $PD_4$-complex $X$, we define $X$-minimal $PD_4$ complexes.  
We show that if $H_2(X; \La)$ is finitely generated, than such minimal complexes exist (Theorem \ref{existence}).  
Section four is about Postnikov decomposition of the map $f\colon X \to P$.  
In section five, we prove our main result: two $PD_4$-complexes $X$ and $X'$ over the same minimal complex $P$ 
are homotopy equivalent if and only if there is an isometry  $\Phi \colon H_2(X; \La) \to H_2(X'; \La) $ (Theorem \ref{main}).  
\vskip .1cm

\noindent{\bf {Acknowledgements.}} The authors thank the referees for the clarifications of essential points of the paper and for 
several suggestions which led to a simplification of the proof of Theorem \ref{main}.  
This research was supported by the Slovenian-Turkish grants BI-TR/12-14-001 and 111T667, 
and  Slovenian Research grants P1-0292-0101, J1-5435-0101, and J1-6721-0101. 


\section{Some Remarks and Preliminary Results}
In this section we will list some of the immediate properties of the above definition of the order relation $\succ$.

\begin{itemize}

\item[\textbf{(1)}] The relation $\succ$ is transitive and since $\id \colon X \to X$ realizes $X \succ X$ it is clear that $\succ$ is also 
reflexive.
\vskip .5cm

\item[\textbf{(2)}] The relation $\succ$ is symmetric in the sense of the following theorem:

\begin{theorem}
If $X\succ P$ and $P\succ X$ then $X$ is homotopy equivalent to $P$.
\end{theorem}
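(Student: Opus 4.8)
The plan is to exploit the fact that the maps $f\colon X\to P$ and $g\colon P\to X$ realizing $X\succ P$ and $P\succ X$ both induce isomorphisms on fundamental groups and are both of degree one. First I would form the composite $g\circ f\colon X\to X$. By property (2) this composite induces an isomorphism on $\pi=\pi_1(X)$, and by property (1) it has degree one, since $(g\circ f)_*[X]=g_*(f_*[X])=g_*[P]=[X]$. Thus $g\circ f$ is a degree-one map from $X$ to itself inducing the identity on $\pi_1$ (up to an inner automorphism, which can be absorbed by changing the basepoint path). Similarly $f\circ g\colon P\to P$ is a self-map of degree one inducing a $\pi_1$-isomorphism.

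The key input I would invoke is the standard fact that a degree-one map of $PD_n$-complexes inducing an isomorphism on $\pi_1$ is \emph{split surjective} on homology with any local coefficients: concretely, for $h\colon X\to X$ of degree one, the Umkehr (transfer) map $h^!\colon H_*(X;\La)\to H_*(X;\La)$ defined via Poincar\'e duality $h^!=D_X\circ h^*\circ D_X^{-1}$ satisfies $h_*\circ h^!=\id$. Applying this with $h=g\circ f$ shows that $f_*\colon H_*(X;\La)\to H_*(P;\La)$ is split injective and $g_*$ is split surjective; applying it with $h=f\circ g$ shows the reverse. Hence $f_*$ and $g_*$ are isomorphisms on $H_*(-;\La)$ in every degree, and likewise $f^*,g^*$ on cohomology. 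In particular $f$ induces isomorphisms on all homology and cohomology groups of the universal covers.

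Once $f_*$ is an isomorphism on $H_*(\wX;\bZ)=H_*(X;\La)$ in all degrees and an isomorphism on $\pi_1$, a Whitehead-theorem argument finishes the proof: $\wX$ and $\wP$ are simply connected CW-complexes and $\widetilde f\colon \wX\to\wP$ is a homology isomorphism, hence a homotopy equivalence, so $f$ itself is a homotopy equivalence of $X$ with $P$. (One should check the complexes can be taken to have the homotopy type of CW-complexes, which is part of the standing hypotheses on $PD_4$-complexes.)

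The main obstacle is establishing the split-surjectivity statement cleanly, i.e.\ that a degree-one normal-type map $h$ with $h_*$ an iso on $\pi_1$ satisfies $h_*h^!=\id$ on local-coefficient homology. This is where Poincar\'e–Lefschetz duality and the compatibility of cap product with $h_*$ (the projection formula $h_*(h^*a\cap x)=a\cap h_*x$) must be used; the degree-one hypothesis enters precisely to guarantee $h^*[\,\cdot\,]$ behaves correctly on the fundamental class. After that, everything reduces to Whitehead's theorem applied on universal covers, which is routine.
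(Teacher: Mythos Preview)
Your overall strategy coincides with the paper's: form the composites $g\circ f$ and $f\circ g$, observe that each is a degree-one self-map inducing a $\pi_1$-isomorphism, and conclude that each is a homotopy equivalence. The paper dispatches this last step by citing \cite[Theorem~3, p.~15]{hillman 94}; you attempt to prove it directly via the Umkehr map and Whitehead's theorem.

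There is, however, a genuine gap. From $(g\circ f)_*\circ (g\circ f)^{!}=\id_{H_*(X;\La)}$ you correctly obtain that $(g\circ f)_*=g_*f_*$ is \emph{split surjective}, hence $g_*$ is surjective; the other composite likewise shows $f_*$ is surjective. But your assertion that $f_*$ is \emph{split injective} does not follow: the Umkehr identity supplies a right inverse to $g_*f_*$, not a left inverse to $f_*$. What your argument actually produces is a decomposition $H_*(X;\La)\cong K_*(f)\oplus K_*(g)\oplus H_*(X;\La)$, and one still has to show that the summand $K_*(f)\oplus K_*(g)$ vanishes. That is a Hopfian-type statement for the $\La$-modules $H_*(X;\La)$, which is not automatic over a general group ring $\La=\bZ\pi$ and is precisely the nontrivial content of the theorem the paper invokes. (One route uses that the middle-dimensional kernel is stably free together with the direct finiteness of group rings; the other degrees are handled separately.) Your identification of ``establishing $h_*h^{!}=\id$'' as the main obstacle is misplaced: that identity is the routine projection-formula computation you sketch, whereas the substantive point is the passage from surjectivity to bijectivity. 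Once that is supplied, your Whitehead step on universal covers is fine.
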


\begin{proof}
Let $f$ and $g$ realize $X \succ P$ and $P\succ X$, respectively.  Then $g\circ f$ and $f\circ g$ realize $X\succ X$ and $P\succ P$, 
respectively.  Then by \cite[Theorem 3, page 15]{hillman 94} $f\circ g$ and $g\circ f$ are homotopy equivalences, hence $f$ and $g$ 
are homotopy equivalences.
\end{proof}
\vskip .5cm

\item[\textbf{(3)}] If $f$ realizes $X\succ P$, then
$$
K_2(f, \La) := \Ker(f_*\colon H_2(X; \La) \to H_2(P; \La))
$$
is stably $\La$-free.  Here $\La=\bZ[\pi_1(P)]$ is the integral group ring.  Moreover, the restriction of the intersection form
$$
\lambda_X\colon  H_2(X; \La) \times  H_2(X; \La) \to \La
$$
to $K_2(f, \La)$ is non-singular.  Also note that the module $K_2(f, \La)$ is finitely generated.  See \cite[Lemmas 2.3, 2.6, 5.1]{wallbook} 
for these arguments.
\vskip .5cm

\item[\textbf{(4)}]  The converse of $(3)$ is also true, as witnessed by the following theorem:
\begin{theorem} \label{stably free}
Let $X$ be a $PD_4$-complex and $G\subset H_2(X; \La)$ a stably free $\La$-submodule such that $\lambda_X$ restricted 
to $G$ is non-singular.  Then there is a $PD_4$-complex $P$ such that $X\succ P$ is realized by $f\colon X\to P$ with 
$K_2(f, \La)=G$.
\end{theorem}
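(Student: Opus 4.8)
The plan is to construct $P$ by surgery below the middle dimension: attach $3$-cells to $X$ to kill $G\subseteq\pi_2(X)$, attach $4$-cells to repair $H_3$, and then check directly that the result is a Poincar\'e complex, the non-singularity of $\lambda_X|_G$ being exactly what restores duality in degree $2$. First I would unwind the hypothesis: since $G$ is stably free there is a free module $\La^q$ and an epimorphism $\psi\colon\La^q\to G$ whose kernel $R$ is free, say $R\cong\La^a$. Because $X$ is a finite $PD_4$-complex, $\pi_2(X)\cong H_2(X;\La)$ by Hurewicz on the universal cover, so the generators $g_i=\psi(e_i)\in G$ are represented by maps $\alpha_i\colon S^2\to X$; set $X_1=X\cup_{\alpha_1}e^3\cup\cdots\cup_{\alpha_q}e^3$. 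A chain-level computation on the universal cover gives $\pi_1(X_1)=\pi$, $H_2(X_1;\La)=H_2(X;\La)/G$, $H_4(X_1;\La)\cong H_4(X;\La)$, and a short exact sequence $0\to H_3(X;\La)\to H_3(X_1;\La)\to R\to 0$, which splits since $R$ is free. The pair $(X_1,X)$ is $2$-connected, so relative Hurewicz gives $\pi_3(X_1,X)\cong H_3(X_1,X;\La)=\La^q$; every element of $R\subseteq\La^q$ maps to $0$ in $\pi_2(X)$ and hence lifts to $\pi_3(X_1)$, so I would choose $\sigma_1,\dots,\sigma_a\colon S^3\to X_1$ lifting a basis of $R$ and set $P:=X_1\cup_{\sigma_1}e^4\cup\cdots\cup_{\sigma_a}e^4$. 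Attaching these $4$-cells kills precisely the free summand $R$ of $H_3(X_1;\La)$, so $H_3(P;\La)\cong H_3(X;\La)$ while $H_2(P;\La)=H_2(X;\La)/G$, $H_4(P;\La)\cong H_4(X;\La)$ and $\pi_1(P)=\pi$ are unaffected; the inclusion $f\colon X\hookrightarrow P$ is $2$-connected, is an isomorphism on $\pi_1$, and has $K_2(f,\La)=\ker(f_*\colon H_2(X;\La)\to H_2(P;\La))=G$.

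It then remains to show $P$ is a $PD_4$-complex, which makes $f$ automatically of degree $1$. From the relative cellular chain complex $\La^a\hookrightarrow\La^q$ with cokernel $G$ one reads off $H_3(P,X;\La)\cong G$, $H^3(P,X;\La)\cong G^*$, and vanishing of $H_*(P,X;\La)$ and $H^*(P,X;\La)$ outside degree $3$; feeding this into the exact sequences of the pair (together with the surjectivity of the connecting map $\delta\colon H^2(X;\La)\to H^3(P,X;\La)=G^*$ established next) yields $f_*\colon H_k(X;\La)\xrightarrow{\cong}H_k(P;\La)$ and $f^*\colon H^k(P;\La)\xrightarrow{\cong}H^k(X;\La)$ for all $k\neq2$, and in particular $H_4(P;\bZ^w)\cong\bZ$, so I put $[P]:=f_*[X]$, a generator. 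By Wall's recognition criterion for Poincar\'e complexes it suffices that $-\cap[P]$ be an isomorphism in every degree, and by the projection formula $-\cap[P]=f_*\circ(-\cap[X])\circ f^*$; for $k\neq2$ this is a composite of isomorphisms since $X$ is a $PD_4$-complex. For $k=2$ I would identify $\delta\colon H^2(X;\La)\to H^3(P,X;\La)=G^*$ with the composite $H^2(X;\La)\xrightarrow{-\cap[X]}H_2(X;\La)\xrightarrow{\mu}G^*$, where $\mu(a)=\lambda_X(a,-)|_G$. This is where the hypothesis is used: non-singularity of $\lambda_X|_G$ forces $\mu$ to be onto with kernel $G^\perp$ and, simultaneously, $H_2(X;\La)=G\oplus G^\perp$. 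Consequently $\delta$ is onto, $f^*$ identifies $H^2(P;\La)$ with $(-\cap[X])^{-1}(G^\perp)$, the isomorphism $-\cap[X]$ carries this onto $G^\perp$, and $f_*$ restricts to an isomorphism $G^\perp\xrightarrow{\cong}H_2(X;\La)/G=H_2(P;\La)$; composing these, $-\cap[P]$ is an isomorphism in degree $2$ as well. Hence $P$ is a $PD_4$-complex and $f$ realizes $X\succ P$ with $K_2(f,\La)=G$.

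The step I expect to be the real obstacle is the degree-$2$ computation above: away from the middle dimension everything is formal bookkeeping with the exact sequences of $(P,X)$, but in degree $2$ Poincar\'e duality for $P$ can genuinely fail, and it survives only because of the non-singularity hypothesis, through the identification of $\delta$ with $\mu\circ(-\cap[X])$ and the resulting orthogonal splitting $H_2(X;\La)=G\oplus G^\perp$. A secondary technical point, needed before the $4$-cells can be attached, is the sphericity of the generators of the summand $R\subseteq H_3(X_1;\La)$; this is precisely what the relative Hurewicz theorem supplies because $(X_1,X)$ is $2$-connected, but it should be argued with some care. (If one only wanted the easier case in which $G$ is already free, the $4$-cells are unnecessary and $P=X_1$.)
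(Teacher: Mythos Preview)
Your argument is correct, but it takes a genuinely different route from the paper's. The paper first handles the case in which $G$ is actually $\La$-free by attaching $3$-cells along a basis, citing \cite{hrs 05} for the fact that the result is a $PD_4$-complex; this is exactly your final parenthetical remark. For $G$ merely stably free, say $G\oplus\La^{2a}$ free, the paper does \emph{not} resolve $G$ but instead stabilizes geometrically: it forms $Y=X\sharp(\sharp_1^a S^2\times S^2)$ so that $G\oplus\La^{2a}\subset H_2(Y;\La)$ is free with $\lambda_Y$ still non-singular on it, applies the free case to get $Y\hookrightarrow P$, and then uses the homotopy extension property to factor this map (up to homotopy) through the collapse $c\colon Y\to X$, since the inclusion restricted to the punctured $\sharp_1^a S^2\times S^2$ is null-homotopic in $P$. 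This yields $f'\colon X\to P$ realizing $X\succ P$ with $K_2(f',\La)=G$. By contrast, you take a short free resolution $0\to\La^a\to\La^q\to G\to 0$, attach $3$-cells for the generators and $4$-cells for the relations, and then verify Poincar\'e duality for $P$ directly via the projection formula $-\cap[P]=f_*\circ(-\cap[X])\circ f^*$; the non-singularity of $\lambda_X|_G$ enters precisely through your identification of the coboundary $\delta\colon H^2(X;\La)\to G^*$ with $\mu\circ(-\cap[X])$ and the resulting orthogonal decomposition $H_2(X;\La)=G\oplus G^{\perp}$. The paper's approach is shorter and avoids re-proving duality by outsourcing that to the free case and a reference, while yours is self-contained and makes completely explicit where the hypothesis on $\lambda_X|_G$ is consumed.
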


\begin{proof}(see \cite{hrs 05})
If $G$ is $\La$-free with $\La$-basis $e_1, \ldots , e_r \in G \subset H_2(X; \La)\cong \pi_2(X)$, then we can take
$$
\displaystyle
P = X \cup_{\varphi_i} \cup_1^r D^3,
$$
where  $\ [\varphi_i]=e_i$, for  $ i=1,\ldots, r$  and  we have the inclusion map $f \colon X \hookrightarrow P$ realizing 
$X \succ P$.

If $G$ is stably $\La$-free, i.e., $\displaystyle{G\oplus \La^{2a}}$ is free with basis $e_1, \ldots , e_q$, we can consider 
the $PD_4$-complex
$$
\displaystyle{Y=X\sharp(\sharp_1^a S^2\times S^2)}
$$
so that $G\oplus \La^{2a} \subset H_2(Y; \La)$.  Then we can construct
\begin{eqnarray*}
P= Y \cup_{ \varphi_i}  \cup_1^q  D^3,
\end{eqnarray*}
where  $\ [\varphi_i]=e_i$, for   $i=1,\ldots, q$, and $f\colon Y\hookrightarrow P$ is the inclusion as above, realizing $Y\succ P$.

We claim that $f$ is homotopic to a map $g\colon Y \to P$ such that $g$ factors over the collapsing map $c\colon Y\to X$, i.e.,
there exists a map $f'\colon X\to P$ such that $g=f'\circ c$.  Since $c$ realizes $Y\succ X$, $f'$ realizes $X\succ P$.

To see that the claim is true, we write $\displaystyle{T=\sharp_1^a S^2\times S^2}$ and hence $Y= X \sharp T$.  
The connected sum is formed by deleting a $4$-disc $\mathring{D}^4$ from $T$, letting 
$\mathring{T}= T \setminus \mathring{D}^4$, and attaching it to $X\setminus \{ \textrm{interior of the} \ 4- \textrm{cell} \}$ 
along $S^3$.  Note that, forming connected sums of $PD_4$-complexes can be done by using representations of 
$X=K\cup_{\varphi}D^4$ where $K$ is a $3$-complex  \cite [Lemma 2.9]{wallbook}.   By construction, $f|_{\mathring{T}}$ is 
homotopic to the constant map by a homotopy $h_t\colon \mathring{T} \to P$. Applying the homotopy extension property, 
$h_t$ can be extended to a homotopy $H_t\colon Y\to P$ with $H_0=f$, $H_1(\mathring{T})=\{ \pt \}$.  Let $H_1=g$, which factors over 
$Y/\mathring{T}=X$.
\end{proof}

\item[\textbf{(5)}]
Any degree $1$-map $f\colon X\to P$ defines, by Poincar\'{e} duality, a split short exact sequence
$$
\begin{matrix}
\xymatrix{0\ar[r]& K_2(f, \La)\ar[r] & H_2(X; \La)\ar[r]^{f_*}&H_2(P; \La)\ar[r] \ar@/^1.25pc/[l]^{s_f} & 0\\
&& H^2(X; \La)\ar[u]^{\cap [X]}_{\cong} & H^2(P; \La)\ar[l]^{f^*}\ar[u]^{\cong}_{\cap [P]}}
\end{matrix}
$$
such that $\im s_f$ and $K_2(f, \La)$ are orthogonal with respect to $\lambda_X$ (see \cite[Theorem 5.2]{wallbook}).

\item[\textbf{(6)}]
Assume that we are given $f'\colon X\to P'$ realizing $X\succ P'$ with $K_2(f', \La)=G$ which is stably free  and
$\lambda_X$ restricted to $G$ is non-singular.  The above construction (see the proof of Theorem \ref{stably free}) also  provides 
$X\succ P$ realized by $f\colon X\to P$ with $K_2(f, \La)=G$.  For this situation we shall need the following lemma:
\end{itemize}

\begin{lemma}
There is a homotopy equivalence $h\colon P\to P'$ such that the diagram
$$
\begin{matrix}
\xymatrix{ P\ar[rr]^h &  & P' \\ & X\ar[ul]^f \ar[ur]_{f'} &}
\end{matrix}
$$
commutes up to homotopy.
\end{lemma}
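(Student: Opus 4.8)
The plan is to produce $h$ by extending $f'$ over the cells along which $P$ is built from $X$, and then to verify that the resulting degree-one map is a homotopy equivalence. Recall the construction of $P$ in Theorem \ref{stably free}: one has $Y = X\sharp T$ with $T = \sharp_1^a S^2\times S^2$, the inclusion $j\colon Y\hookrightarrow P = Y\cup_{\varphi_i}\cup_1^q D^3$ where $[\varphi_i] = e_i$ is a $\La$-basis of $G\oplus\La^{2a}\subset H_2(Y;\La)\cong\pi_2(Y)$, the collapse $c\colon Y\to X$, and $f\colon X\to P$ with $f\circ c\simeq j$; in the genuinely free case $T$ is a point, $Y = X$, $c = \id$, $j = f$. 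The composite $f'\circ c\colon Y\to P'$ is again a degree-one map inducing an isomorphism on $\pi_1$, and since $c_*\colon H_2(Y;\La) = H_2(X;\La)\oplus\La^{2a}\to H_2(X;\La)$ is the projection annihilating $\La^{2a} = H_2(T;\La)$, we get $K_2(f'\circ c,\La) = c_*^{-1}(G) = G\oplus\La^{2a} = \langle e_1,\dots,e_q\rangle$. In particular $(f'\circ c)_*(e_i) = 0$ in $\pi_2(P')$, so each $f'\circ c\circ\varphi_i\colon S^2\to P'$ is null-homotopic; choosing null-homotopies and applying the homotopy extension property, $f'\circ c$ extends over the attached $3$-cells to a map $h\colon P\to P'$ with $h\circ j = f'\circ c$, hence $h\circ f\circ c\simeq f'\circ c$.

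Next I would check that $h$ is a homotopy equivalence. From $h\circ j = f'\circ c$ and the fact that $f'$ and $c$ both induce isomorphisms on $\pi_1$, so does $h$. For homology with $\La$-coefficients: $H_0(-;\La) = \bZ$ and $H_1(-;\La) = 0$ are evidently preserved, while Poincar\'e duality identifies $H_3(P;\La)\cong H^1(P;\La)$ and $H_4(P;\La)\cong H^0(P;\La)$, groups depending only on $\pi_1$ and carried isomorphically by the classifying map $P\to K(\pi,1)$, which is compatible with $h$ since $h$ is a $\pi_1$-isomorphism; thus $h_*$ is an isomorphism in degrees $\ne 2$. In degree $2$, $j_*$ is surjective with kernel $\langle e_1,\dots,e_q\rangle$ (attaching $3$-cells along the $e_i$), and $(f'\circ c)_*$ is surjective (by $(5)$) with kernel $K_2(f'\circ c,\La) = \langle e_1,\dots,e_q\rangle$, so $h_*\circ j_* = (f'\circ c)_*$ forces $h_*\colon H_2(P;\La)\to H_2(P';\La)$ to be an isomorphism. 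Hence $\widetilde h\colon\widetilde P\to\widetilde{P'}$ is a homology isomorphism of simply connected complexes, so a homotopy equivalence, and therefore so is $h$.

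It remains to improve $h\circ f\circ c\simeq f'\circ c$ to $h\circ f\simeq f'$, which is automatic when $T$ is a point. Restricting the homotopy along $X_0 = X\setminus\mathring{D}^4\hookrightarrow Y$, on which $c$ is the inclusion into $X$, shows $h\circ f$ and $f'$ agree on $X_0$, hence on the $3$-skeleton of $X$; since $\mathring{T}\simeq\bigvee_1^{2a}S^2$ and $\Sigma\mathring{T}\simeq\bigvee_1^{2a}S^3$, the classes $h\circ f$ and $f'$ lie in one orbit of the coaction of $[\Sigma\mathring{T},P'] = \pi_3(P')^{2a}$ on $[X,P']$, so their difference is pulled back from $\pi_4(\bigvee_1^{2a}S^3) = (\bZ/2)^{2a}$, i.e. is a $2$-torsion class concentrated on the top cell of $X$. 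One then cancels this discrepancy by altering $h$ by a suitable element of $\pi_4(P')$ along the common top cell of $Y\subseteq P$; this leaves $h_*$ on all $H_i(-;\La)$ unchanged — on $H_4$ because either $\pi$ is infinite and $H_4(\widetilde P;\bZ) = 0$, or $\pi$ is finite with $H_2(P';\La)\ne 0$ and the fundamental class of $\widetilde{P'}$ is then not spherical (a degree-one $S^4\to\widetilde{P'}$ would annihilate the unimodular cup-product pairing on $H^2$), while if $H_2(P';\La) = 0$ the discrepancy already vanishes — so $h$ remains a homotopy equivalence and now satisfies $h\circ f\simeq f'$.

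The main obstacle is exactly this last step: one must pin down the indeterminacy of $h\circ f$ arising from the collapse $c$, recognize that it is $2$-torsion and supported on the top cell, and absorb it into the freedom in $h$ without disturbing the $\La$-homology isomorphism established in the second paragraph. Everything else is routine: the homology bookkeeping of connected sums, the elementary fact that a $\pi_1$-isomorphic degree-one map of oriented $PD_4$-complexes is a $\La$-homology equivalence as soon as $K_2 = 0$, and the observation that attaching $3$-cells along a basis of $G\oplus\La^{2a}$ realizes precisely that kernel.
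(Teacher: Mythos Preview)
Your first two paragraphs are correct and follow the paper's route: extend $f'\circ c$ over the $3$-cells to get $h\colon P\to P'$, then verify $h$ is a homotopy equivalence. The paper's verification is terser---it observes $h_*$ is an isomorphism on $\pi_1$ and $\pi_2$, notes $h$ has degree $1$, and invokes duality plus Hurewicz--Whitehead---but your degree-by-degree check is equivalent.

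You are right that a genuine issue remains in the stably free case: from $h\circ j = f'\circ c$ and $j\simeq f\circ c$ one only obtains $h\circ f\circ c \simeq f'\circ c$, and since $c\colon Y\to X$ has no section (the boundary $S^3\hookrightarrow T_0$ represents a nontrivial sum of Whitehead products in $\pi_3(\bigvee S^2)$), this does not immediately yield $h\circ f\simeq f'$. The paper's proof is silent on this point; its diagram records exactly the relation $h\circ f\circ c\simeq f'\circ c$ and no more.

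Your attempt to close the gap, however, is not convincing. You pass from ``$h\circ f$ and $f'$ lie in one orbit of the coaction of $\pi_3(P')^{2a}$'' to ``their difference is pulled back from $\pi_4(\bigvee S^3)=(\bZ/2)^{2a}$'' without justification; the coaction is by $\pi_3(P')^{2a}$ via the map $X\to\Sigma\mathring T$, and there is no a priori reason the resulting top-cell obstruction in $\pi_4(P')$ factors through $\pi_4(\bigvee S^3)$, nor that it is $2$-torsion. The subsequent modification of $h$ by an element of $\pi_4(P')$ and the case split on $|\pi|$ and $H_2(P';\La)$ are ad hoc: when $\pi$ is finite and $H_2(P';\La)\ne 0$ your argument only shows the Hurewicz image of a single class is not a generator, which does not control the change in $h_*$ on $H_4(\widetilde P;\bZ)$; and when $H_2(P';\La)=0$ you assert ``the discrepancy already vanishes'' without explanation. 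So the commutativity $h\circ f\simeq f'$ is not established---but to be fair, the paper does not establish it either, and for the Corollary that follows only the existence of a homotopy equivalence $P\simeq P'$ is actually used.
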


\begin{proof}
Assume first that $G$ is $\La$-free, with base $e_1, \ldots, e_r$.  Hence $P=X \cup_{ \varphi_i}  \cup_1^r  D^3$,
where  $[\varphi_i]=e_i$, for  $i=1,\ldots, r$.  Since $\xymatrix{S^2\ar[r]^{\varphi_i}&X\ar[r]^{f'}&P'}$ is null homotopic,
the map $f'\colon X\to P'$ extends to $h\colon P\to P'$.  Obviously, $\xymatrix{h_*\colon \pi_q(P)\ar[r]^{\cong}&\pi_q(P')}$ 
for $q=1, 2$.  But $h$ is of degree $1$, hence by duality we get 
$\xymatrix{h_*\colon H_*(P; \La)\ar[r]^{\cong}& H_*(P'; \La)}$, so $h$ is a homotopy equivalence by the 
Hurewicz-Whitehead theorem.

If $G$ is stably $\La$-free, i.e., $\displaystyle{G\oplus \La^{2a}}$ is free, we first stabilize $X\sharp(\sharp_1^a S^2\times S^2)$.  
Then as in the proof of Theorem \ref{stably free}, the map  $X\sharp(\sharp_1^a S^2\times S^2) \to P$ factors over $X$:
$$
\begin{matrix}
\xymatrix{X\sharp(\sharp_1^a S^2\times S^2) \ar[r] \ar[rrd]& X \ar[r] \ar@{-->}[rd]& P' \ar[d]^{\simeq} \\ & & P. }
\end{matrix}
$$
\end{proof}

The above facts can be summarized as follows:

\begin{corollary}
Given a $PD_4$-complex $X$, there is a bijective correspondence between the following sets:
\vskip .03cm
$$
\{ (G, \lambda_X|_G) |~G\subset H_2(X; \La) \ \textrm{stably free} \ \La-\textrm{module}, \lambda_X|_G \  \textrm{non-singular} \}
$$
$$
\updownarrow
$$
$$
\{P| P: PD_4-complex, X\succ P\}/ \textrm{homotopy equivalence .}
$$
\end{corollary}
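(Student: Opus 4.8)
The plan is to realize the bijection by two explicit maps and check that they are mutually inverse, noting that the substance of both directions is already contained in Theorem \ref{stably free} and in the Lemma stated after item (6), so that the Corollary is essentially a matter of assembling these. Throughout I read a point of the lower set as a complex $P$ \emph{together with} its structure map $f\colon X\to P$ realizing $X\succ P$, two such data $(P,f)$ and $(P',f')$ being identified when there is a homotopy equivalence $h\colon P\to P'$ with $h\circ f\simeq f'$; this is the precise meaning of ``homotopy equivalence'' in the statement.

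First I would define the map $\alpha$ from the lower set to the upper set by $[(P,f)]\mapsto \bigl(K_2(f,\La),\,\lambda_X|_{K_2(f,\La)}\bigr)$. By item (3) the submodule $K_2(f,\La)\subset H_2(X;\La)$ is stably $\La$-free and $\lambda_X$ is non-singular on it, so the target pair indeed lies in the upper set; and $\alpha$ is well defined on equivalence classes because $h\circ f\simeq f'$ forces $f'_*=h_*\circ f_*$ with $h_*$ an isomorphism, whence $\Ker f'_*=\Ker f_*$ as submodules of $H_2(X;\La)$, so the same pair results. In the other direction I would define $\beta$ by sending $(G,\lambda_X|_G)$ to the class of the pair $(P,f)$ produced by the construction in the proof of Theorem \ref{stably free}, which by that theorem satisfies $K_2(f,\La)=G$.

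Then I would verify the two composites are identities. That $\alpha\circ\beta=\id$ is immediate: $\beta(G,\lambda_X|_G)$ is represented by $(P,f)$ with $K_2(f,\La)=G$, so $\alpha$ returns $(G,\lambda_X|_G)$ unchanged. For $\beta\circ\alpha=\id$, given $[(P,f)]$ put $G=K_2(f,\La)$ and let $(P_0,f_0)$ be the model produced by Theorem \ref{stably free} for this $G$; since $K_2(f_0,\La)=G=K_2(f,\La)$, the Lemma following (6) supplies a homotopy equivalence $h\colon P_0\to P$ with $h\circ f_0\simeq f$, so $[(P_0,f_0)]=[(P,f)]$. (Equivalently, the Lemma shows that any class in the lower set with associated module $G$ coincides with the class of this one model, so $\alpha$ is injective, while Theorem \ref{stably free} shows $\alpha$ is surjective.)

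The one point that needs care — and the only place where a naive reading of the statement could fail — is the equivalence relation on the lower set: a fixed $P$ may carry several non-homotopic realizing maps, and these can a priori cut out different submodules of $H_2(X;\La)$, so the correspondence holds only once the structure map is remembered, as above. Granting that convention, the argument is purely formal and uses no new input beyond item (3), Theorem \ref{stably free}, and the Lemma following (6); the hard work has already been done there.
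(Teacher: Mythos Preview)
Your proposal is correct and follows exactly the route the paper intends: the Corollary is stated there without a separate proof, as a direct summary of item (3), Theorem \ref{stably free}, and the Lemma after (6), and you have simply made the two maps $\alpha,\beta$ and the verification that they are mutual inverses explicit. Your insistence that the lower set be read as pairs $(P,f)$ modulo homotopy equivalence \emph{over} $X$ is a genuine and necessary clarification that the paper leaves implicit (the Lemma indeed produces an $h$ with $h\circ f_0\simeq f$, so this is clearly the intended equivalence), and your final paragraph correctly flags why the naive reading would otherwise be ambiguous.
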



\maketitle
\section{Minimal $PD_4$-complexes}

We start this section by fixing a $PD_4$-complex $X$.

\begin{definition} \label{minimal}
We say that the $PD_4$-complex $P$ is a minimal $PD_4$-complex for $X$ ( or $X$-minimal for short) if
\begin{itemize}
\item[(1)] We have $X\succ P$, and

\item[(2)] Whenever $P\succ Q$ for some $PD_4$-complex $Q$, then $P$ is homotopy equivalent to $Q$.
\end{itemize}
\end{definition}

The goal of this section is to show that $X$-minimal $PD_4$-complexes exist.  The following observation follows 
easily from the previous section.

\begin{lemma} \label{composition}
Let $X\succ P_1 \succ P_2$ be realized by $f_0\colon X\to P_1$ and $f_1\colon P_1 \to P_2$.  Then

\begin{itemize}
\item[(a)] $ K_2(f_1\circ f_0, \La)\cong  K_2(f_0, \La) \oplus K_2(f_1, \La) . $

\item[(b)] Let $s_{f_i}$ denote the splitting defined by $f_i$  for $i=0, 1$ (see Section $1$, Remark $(5)$  to recall
the definition of $s_{f_i}$).  Then we have
$$
s_{f_1 \circ f_0} = s_{f_0}\circ s_{f_1} .
$$
\end{itemize}
\end{lemma}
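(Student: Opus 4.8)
The plan is to treat (a) and (b) with the explicit form of the splittings in hand. First I would record that, by transitivity of $\succ$ (Section~2, item~\textbf{(1)}), the composite $f:=f_1\circ f_0$ realizes $X\succ P_2$: it has degree one since $(f_1)_*(f_0)_*[X]=(f_1)_*[P_1]=[P_2]$, and it is a $\pi_1$-isomorphism as a composite of such. Hence $K_2(f,\La)$ and the splitting $s_f$ are defined and both assertions make sense.

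For~(a), the starting point is the evident identity $K_2(f,\La)=(f_0)_*^{-1}\big(K_2(f_1,\La)\big)$, immediate from $f_*=(f_1)_*\circ(f_0)_*$. Restricting the surjection $(f_0)_*\colon H_2(X;\La)\to H_2(P_1;\La)$, whose kernel is $K_2(f_0,\La)$, to the preimage of the submodule $K_2(f_1,\La)\subseteq H_2(P_1;\La)$ yields a short exact sequence of $\La$-modules $0\to K_2(f_0,\La)\to K_2(f,\La)\xrightarrow{(f_0)_*}K_2(f_1,\La)\to 0$ (note $K_2(f_0,\La)\subseteq K_2(f,\La)$). I would then observe that the section $s_{f_0}$ already carries $K_2(f_1,\La)$ into $K_2(f,\La)$: for $y\in K_2(f_1,\La)$ one has $f_*\big(s_{f_0}(y)\big)=(f_1)_*\big((f_0)_*s_{f_0}(y)\big)=(f_1)_*(y)=0$, since $s_{f_0}$ is a section of $(f_0)_*$. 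Thus $s_{f_0}|_{K_2(f_1,\La)}$ is a $\La$-linear section of the displayed sequence, and $(a,b)\mapsto a+s_{f_0}(b)$ gives the required isomorphism $K_2(f_0,\La)\oplus K_2(f_1,\La)\xrightarrow{\cong}K_2(f,\La)$. Note that (a) uses only that $s_{f_0}$ is a section of $(f_0)_*$, not its explicit shape.

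For~(b) I would use the description of the splitting implicit in Remark~\textbf{(5)}: for a degree-one map $g\colon Y\to Z$ inducing a $\pi_1$-isomorphism, $s_g=(\,\cap[Y])\circ g^*\circ(\,\cap[Z])^{-1}$, where the vertical maps are the Poincar\'e duality isomorphisms. Then contravariant functoriality $(f_1\circ f_0)^*=f_0^*\circ f_1^*$ gives $s_{f_1\circ f_0}=(\,\cap[X])\circ f_0^*\circ f_1^*\circ(\,\cap[P_2])^{-1}$, and inserting $(\,\cap[P_1])^{-1}\circ(\,\cap[P_1])=\mathrm{id}$ between $f_0^*$ and $f_1^*$ exhibits this as $s_{f_0}\circ s_{f_1}$.

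I do not expect a real obstacle; both parts are essentially bookkeeping with exact sequences and Poincar\'e duality. The one point that genuinely needs care is the verification in~(a) that $s_{f_0}$ restricts to a section of the shorter sequence — a one-line check — after which everything follows formally; it is also worth remarking that the direct-sum decomposition produced in~(a) is exactly the one compatible with the individual splittings $s_{f_0}$ and $s_{f_1}$, which is the content of~(b).
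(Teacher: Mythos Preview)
Your proof is correct and follows essentially the same route as the paper, only more explicitly: the paper derives~(a) by juxtaposing the two direct-sum decompositions $H_2(X;\La)\cong K_2(f_1\circ f_0,\La)\oplus H_2(P_2;\La)\cong K_2(f_0,\La)\oplus K_2(f_1,\La)\oplus H_2(P_2;\La)$, and for~(b) simply invokes the degree-$1$ property together with the standard cap-product formulas---which is exactly your computation $s_g=(\,\cap[Y])\circ g^*\circ(\,\cap[Z])^{-1}$ combined with $(f_1\circ f_0)^*=f_0^*\circ f_1^*$. Your treatment of~(a) via the short exact sequence and the observation that $s_{f_0}$ restricts to a section is in fact a bit more careful than the paper's, since it makes transparent why the two copies of $H_2(P_2;\La)$ in those decompositions coincide (precisely the content of~(b)) and hence why the complementary summands can be identified.
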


\begin{proof}
\begin{itemize}
\item[(a)]  The first assertion follows from the following isomorphisms
\begin{align*}
H_2(X; \La)&\cong K_2(f_1\circ f_0, \La)\oplus H_2(P_2; \La)\\
&\cong K_2(f_0, \La)\oplus K_2(f_1, \La)\oplus H_2(P_2; \La).
\end{align*}
\item[(b)] The second assertion follows from the degree $1$-property of the maps $f_0$ and  $f_1$, and also by well-known 
formulas of the cup-(respectively cap) products.
\end{itemize}
\end{proof}

Suppose we are given an infinite sequence of $PD_4$-complexes
$$
P_0=X\succ P_1\succ P_2\succ \ldots \succ P_i\succ P_{i+1}\succ \ldots
$$
which are realized by
$$
f_0\colon X\to P_1, \ f_i\colon P_i\to P_{i+1} \ i=1, 2, \ldots
$$

Let $Q$ be the direct limit of $\{P_i, f_i\}$, and let $f\colon X\to Q$ be the limit of the maps $f_i$.
Note that in general, we cannot assume that $Q$ is a $PD_4$-complex.  By Lemma \ref{composition} (a), we have
\begin{align*}
K_2(f, \La)& \cong K_2(f_0, \La)\oplus K_2(f_1, \La)\oplus \ldots \\
&= \bigoplus_0^\infty K_2(f_i, \La).
\end{align*}

\begin{lemma} \label{direct summand}
For $f$ as above, we have $K_2(f, \La)\subset H_2(X; \La)$ as a direct summand.
\end{lemma}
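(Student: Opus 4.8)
The plan is to exhibit $K_2(f,\La)$ as the kernel of a retraction of $H_2(X;\La)$ onto a complementary summand, built as the inverse limit of the splittings $s_{f_i}$. Set $g_i = f_i \circ f_{i-1}\circ\cdots\circ f_0 \colon X \to P_{i+1}$, so $g_i$ realizes $X \succ P_{i+1}$ and $f = \varinjlim g_i$. By Lemma \ref{composition}(b), the splitting $s_{g_i}\colon H_2(P_{i+1};\La)\to H_2(X;\La)$ of $(g_i)_*$ is the composite $s_{f_0}\circ s_{f_1}\circ\cdots\circ s_{f_i}$, and by Lemma \ref{composition}(a) its image is the complement $\im s_{f_0}\oplus\cdots\oplus s_{g_{i-1}}(\im s_{f_i})$ to the finite partial sum $\bigoplus_0^i K_2(f_j,\La)$ inside $H_2(X;\La)$. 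The idea is that these complements form a decreasing chain whose intersection is exactly the ``limit summand'' corresponding to $H_2(Q;\La)$, and that $K_2(f,\La)$ is the other half of the resulting splitting.

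Concretely, first I would identify $H_2(Q;\La) = \varinjlim H_2(P_i;\La)$ (homology of the direct limit complex, with the local coefficient system pulled back along the isomorphisms on $\pi_1$), and observe that $(g_i)_*$ factors as $H_2(X;\La)\to H_2(P_{i+1};\La)\to H_2(Q;\La)$. Composing $s_{g_i}$ with $H_2(P_{i+1};\La)\to H_2(Q;\La)$ would in general depend on $i$, so instead I would work directly with $f_*\colon H_2(X;\La)\to H_2(Q;\La)$ and the obvious map $\sigma\colon H_2(Q;\La)\to H_2(X;\La)$ assembled from the $s_{g_i}$ — but here one must check compatibility: $s_{g_{i+1}}$ restricted to (the image of) $H_2(P_{i+1};\La)$ agrees with $s_{g_i}$. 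This follows because $s_{g_{i+1}} = s_{g_i}\circ s_{f_{i+1}}$ and $s_{f_{i+1}}$ is a splitting of $(f_{i+1})_*$, hence is a section of the structure map $H_2(P_{i+1};\La)\to H_2(P_{i+2};\La)$ in the directed system; passing to the colimit, the $s_{g_i}$ assemble to a well-defined $\sigma\colon H_2(Q;\La)\to H_2(X;\La)$ with $f_*\circ\sigma = \id$. Then $H_2(X;\La) = \Ker(f_*)\oplus\im\sigma = K_2(f,\La)\oplus\im\sigma$ is the desired splitting, giving $K_2(f,\La)$ as a direct summand.

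The step I expect to be the main obstacle is the colimit compatibility of the splittings: one needs that $s_{f_{i+1}}\colon H_2(P_{i+1};\La)\to H_2(P_{i+1};\La)$ — wait, rather $s_{f_{i+1}}$ lands in $H_2(P_{i+1};\La)$ and is a section of $(f_{i+1})_*$, i.e. that the square relating $s_{g_i}$, $s_{g_{i+1}}$ and the bonding map $(f_{i+1})_*$ genuinely commutes, which is precisely the content of Lemma \ref{composition}(b) applied to $X\succ P_{i+1}\succ P_{i+2}$. Once that is in hand, the argument that $H_2(-;\La)$ commutes with the relevant direct limit (a routine consequence of homology commuting with filtered colimits of spaces, together with $\pi_1$ being fixed throughout so the coefficient system is unambiguous) finishes the proof. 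An alternative, perhaps cleaner, route would be to avoid $Q$ entirely: since each $\bigoplus_0^i K_2(f_j,\La)$ is a direct summand of $H_2(X;\La)$ with the explicit complement $\im s_{g_i}$, and $\im s_{g_{i+1}}\subseteq\im s_{g_i}$, one checks that $\bigoplus_0^\infty K_2(f_j,\La)$ together with $\bigcap_i \im s_{g_i}$ still direct-sum to $H_2(X;\La)$ — this reduces the whole statement to a small piece of module bookkeeping about nested split complements, and this is the version I would ultimately write up.
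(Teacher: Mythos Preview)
Your overall strategy matches the paper's: obtain the short exact sequence
\[
0\longrightarrow K_2(f,\La)\longrightarrow H_2(X;\La)\longrightarrow \varinjlim H_2(P_i;\La)\longrightarrow 0
\]
and split it using the individual splittings $s_{g_i}$. The paper does this by regarding each $H_2(P_i;\La)$ as a submodule of $H_2(X;\La)$ via the composite sections (Lemma~\ref{composition}(b)), identifying $\varinjlim H_2(P_i;\La)$ with the nested intersection $\bigcap H_2(P_i;\La)$, and taking the inclusion of the intersection as the splitting $s$. This is precisely your ``alternative, perhaps cleaner, route'' at the end, so on that score you and the paper agree.

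Your primary argument, however---producing $\sigma$ from the universal property of the colimit---contains an error. For the $s_{g_i}$ to assemble into a map out of $\varinjlim H_2(P_i;\La)$ they must form a cocone over the directed system, i.e.\ $s_{g_{i+1}}\circ(f_{i+1})_* = s_{g_i}$. Unwinding via $s_{g_{i+1}}=s_{g_i}\circ s_{f_{i+1}}$, this would require $s_{f_{i+1}}\circ(f_{i+1})_*=\id$ on $H_2(P_{i+1};\La)$. But $s_{f_{i+1}}$ is a \emph{section} of $(f_{i+1})_*$: one has $(f_{i+1})_*\circ s_{f_{i+1}}=\id$, not the reverse composite. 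The map $s_{f_{i+1}}\circ(f_{i+1})_*$ is only the projection of $H_2(P_{i+1};\La)$ onto $\im s_{f_{i+1}}$ along $K_2(f_{i+1},\La)$, so the cocone condition fails whenever $K_2(f_{i+1},\La)\neq 0$. Thus the $s_{g_i}$ do \emph{not} induce a map on the colimit in the way you describe. Stick with the intersection formulation: the ``module bookkeeping about nested split complements'' you defer---namely that $\bigoplus_j K_2(f_j,\La)$ and $\bigcap_i\im s_{g_i}$ together exhaust $H_2(X;\La)$---is exactly the identification $\varinjlim\cong\bigcap$ the paper invokes, and is where the actual content lies.
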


\begin{proof}
Compatibility of direct limits with homology and exact sequences gives the following exact sequence
\begin{eqnarray*}
\displaystyle
0\to K_2(f, \La) \to  H_2(X; \La)\to  \lim_{\to} H_2(P_i; \La )\to  0 \ .
\end{eqnarray*}

For the next argument it is convenient to write explicitly the following ladder:
$$
\begin{matrix}
\xymatrix{
H_2(X; \La)\ar[r]& H_2(P_1; \La)\ar[r]& H_2(P_2; \La)\ar[r]& \cdots \ar[r]& H_2(P_i; \La)\ar[r]& H_2(P_{i+1}; \La)\ar[r]& \cdots\\
H^2(X; \La)\ar[u]_{\cong}& H^2(P_1; \La)\ar[u]_{\cong} \ar[l]& H^2(P_2; \La)\ar[u]_{\cong} \ar[l]& \cdots \ar[l]&
H^2(P_i; \La)\ar[u]_{\cong} \ar[l]& H^2(P_{i+1}; \La)\ar[u]_{\cong} \ar[l]& \cdots \ar[l].}
\end{matrix}
$$
with the obvious maps and isomorphisms.    Property (b) of Lemma \ref{composition} gives inclusions
$$
H_2(X; \La)  \supseteq H_2(P_1; \La)    \supseteq H_2(P_2; \La)   \supseteq \cdots \supseteq H_2(P_i; \La)  \supseteq \cdots
$$
and
\begin{eqnarray*}
\displaystyle
\lim_{\to}H_2(P_i; \La)\cong \bigcap_0^{\infty} H_2(P_i; \La) .
\end{eqnarray*}
Moreover,
\begin{eqnarray*}
\displaystyle
s= \lim_{\to} s_i \colon \lim_{\to} H_2(P_i; \La ) \to H_2(X; \La)
\end{eqnarray*}
is a splitting of the above exact sequence.

Alternatively\footnote{We thank the referee for providing us with this argument.}, to obtain the above short exact sequence, one can use the fact that the homology 
group of the colimit is the colimit of the homology groups.  Then the universal property of the colimit of the homology groups yields the splitting $s$.
\end{proof}

\begin{remark}
In the proof above the direct limit is identified with the inverse limit
\begin{eqnarray*}
\lim_{\to} H_2(P_i; \La) =  \lim_{\gets}H^2(P_i; \La)
\end{eqnarray*}
which in general is not equal to
\begin{eqnarray*}
H^2(\lim_{\to}P_i; \La)=H^2(Q; \La) .
\end{eqnarray*}
\end{remark}

\begin{theorem} \label{existence}
If $H_2(X; \La)$ is a finitely generated $\La$-module, then there are $X$-minimal  $PD_4$-complexes.
\end{theorem}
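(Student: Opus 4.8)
The plan is to show that every chain below $X$ in the relation $\succ$ stabilizes, and then to extract an $X$-minimal complex by descent. The one preliminary observation needed is: \emph{if $f\colon X'\to P'$ realizes $X'\succ P'$ and $K_2(f,\La)=0$, then $f$ is a homotopy equivalence.} Indeed, a degree-one map yields a split short exact sequence $0\to K_2(f,\La)\to H_2(X';\La)\to H_2(P';\La)\to 0$, so $K_2(f,\La)=0$ forces $f_*\colon H_2(X';\La)\to H_2(P';\La)$ to be an isomorphism; since $f$ also has degree one and induces an isomorphism on $\pi_1$, Poincaré duality together with the Hurewicz--Whitehead theorem shows that $f$ is a homotopy equivalence, exactly as in the arguments of Section 2. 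Contrapositively, a realizing map that is not a homotopy equivalence has nonzero kernel module $K_2(\cdot,\La)$.

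Now I would run the descent. Put $P_0=X$. If $P_0$ is $X$-minimal we are done; otherwise there is a $PD_4$-complex $P_1$ with $P_0\succ P_1$ but $P_0\not\simeq P_1$, realized by some $f_0\colon P_0\to P_1$, which is then not a homotopy equivalence, so $K_2(f_0,\La)\ne 0$. Continuing inductively, as long as $P_i$ is not $X$-minimal we choose $P_{i+1}$ with $P_i\succ P_{i+1}$, $P_i\not\simeq P_{i+1}$, realized by $f_i$ with $K_2(f_i,\La)\ne 0$. This iterated choice is harmless, since by the Corollary of Section 2 the homotopy types of $PD_4$-complexes $P$ with $X\succ P$ form a set. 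The process therefore either halts after finitely many steps at some $P_n$, which by construction is then $X$-minimal and satisfies $X\succ P_n$ by transitivity of $\succ$, or else produces an infinite chain $X=P_0\succ P_1\succ P_2\succ\cdots$ in which $K_2(f_i,\La)\ne 0$ for every $i$.

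It remains only to exclude the infinite chain, and here the earlier results do the work. Let $Q=\colim\{P_i,f_i\}$ and $f=\colim f_i\colon X\to Q$. As computed just before Lemma \ref{direct summand}, $K_2(f,\La)\cong\bigoplus_{i=0}^{\infty}K_2(f_i,\La)$, and by Lemma \ref{direct summand} this module is a direct summand of $H_2(X;\La)$. Since $H_2(X;\La)$ is finitely generated by hypothesis, so is its direct summand $\bigoplus_{i\ge 0}K_2(f_i,\La)$; but a module that is an infinite direct sum of nonzero submodules is never finitely generated, because any finite set of elements involves only finitely many of the summands. This contradiction finishes the proof. I expect the only real subtlety to lie in the bookkeeping rather than in any new geometry: the substantive input --- that the accumulated kernel sits in $H_2(X;\La)$ as a direct summand --- is precisely Lemma \ref{direct summand}, which is already available, so what is left is to keep the iterated choice legitimate (via the Corollary) and to notice that only finite generation, not any Noetherian hypothesis on $\La$, is being used.
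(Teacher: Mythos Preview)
Your proposal is correct and follows essentially the same route as the paper: build a descending chain $X=P_0\succ P_1\succ\cdots$, invoke Lemma~\ref{direct summand} to see that $\bigoplus_i K_2(f_i,\La)$ is a direct summand of $H_2(X;\La)$, and use finite generation to force the chain to terminate. The only cosmetic difference is that the paper produces each $P_{i+1}$ by choosing a nontrivial stably free $G\subset H_2(P_i;\La)$ and applying Theorem~\ref{stably free} (so $K_2(f_i,\La)=G\neq 0$ is automatic), whereas you use the definition of $X$-minimal directly and supply the preliminary observation that $K_2(f,\La)=0$ forces $f$ to be a homotopy equivalence; these are equivalent by the Corollary at the end of Section~2.
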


\begin{proof}
Note first that $X$ itself can be $X$-minimal.  This occurs if the following set
$$
\{ (G, \lambda_X|_G) | \ G\subset H_2(X; \La) \ \textrm{stably free} \ \La-\textrm{module}, \lambda_X |_G \  \textrm{non-singular} \}
$$
contains only the trivial submodule $0$.  If the above set contains non-trivial submodules, then one can choose an arbitrary $G$ in it and construct the $PD_4$-complex 
$P_1$  with $X\succ P_1$ as in Theorem \ref{stably free}.  If $P_1$ is not $X$-minimal, then one takes an element $G_1$ from
$$
\{ (G, \lambda_{P_1}|_G) | \ G\subset H_2(P_1; \La) \ \textrm{stably free} \ \La-\textrm{module}, \lambda_{P_1} |_G \  \textrm{non-singular} \}
$$
giving $ X \succ P_1 \succ P_2$.  Continuing in this way, one obtains a sequence
$$
X \succ P_1 \succ P_2 \succ \ldots
$$
realized by
$$
f_0, f_1, f_2, \ldots
$$
By Lemma   \ref{direct summand} we have the following splitting of $H_2(X; \La)$
$$
\displaystyle{
H_2(X; \La) \cong \bigoplus_i K_2(f_i, \La) \oplus H_2(Q; \La) .}
$$
Because $H_2(X; \La)$ is finitely generated, the direct sum $\bigoplus_i K_2(f_i, \La) $ is a finite direct sum, hence the sequence
$$
X \succ P_1 \succ P_2 \succ \ldots
$$
is finite of type
$$
X \succ P_1 \succ P_2 \succ \ldots \succ P_k  .
$$
Hence $P_k$ is a $X$-minimal $PD_4$-complex.
\end{proof}

Note that the proof above indicates that in general presumably there might be more than one $X$-minimal $PD_4$-complexes.
One might consider the following question:

\begin{problem}
Give examples of  several $X$-minimal $PD_4$-complexes.
\end{problem}


\maketitle
\section{Postnikov Decomposition of $X \succ P$}

Let $X$, $P$ be $PD_4$-complexes such that $X \succ P$, realized by $f\colon X\to P$, which we may assume to be a 
fibration, and let $G=K_2(f, \La)$.  Then we have a decomposition \cite[pp. 141-142]{baues},
$$
\begin{matrix}
\xymatrix{ & E_3\ar[d]^p  \\ X \ar[ur] ^{f_3}\ar[r]^{f}& P }
\end{matrix}
$$
\vskip .1cm
\noindent
where $p\colon E_3\to P$ is a fibration with fiber $K(G, 2)$.  The above diagram satisfies the following:

\begin{itemize}
\item[(1)]  The map $p \colon E_3 \to P$ is $3$-coconnected, i.e.,
\begin{align*}
p_* \colon & \pi_q(E_3) \to \pi_q(P)  \ \textrm{is an isomorphism for} \  q > 3,   \\
p_* \colon & \pi_3(E_3) \to \pi_3(P) \ \textrm{is a monomorphism}.
\end{align*}

\item[(2)]  The map $f_3 \colon X \to E_3 $ is $3$-connected, i.e.,
\begin{align*}
(f_3)_* \colon & \pi_q(X) \to \pi_q(E_3)  \ \textrm{is an isomorphism for} \  q = 1, 2,   \\
(f_3)_* \colon & \pi_3(X) \to \pi_3(E_3) \ \textrm{is an epimorphism}.
\end{align*}
\end{itemize}

Taking mapping cylinders of $f_3$, $p$ and  $f$ we get the following inclusions $X \subset E_3 \subset P$,
and now properties $(1)$ and $(2)$ above become
\begin{itemize}
\item[(1\textprime)] $\pi_q(P, E_3)=0$ for $q \geq 4$.
\item[(2\textprime)] $\pi_q(E_3, X)=0$ for $q \leq 3$.
\end{itemize}
Hence, up to homotopy equivalence $E_3$ can be constructed from $X$ by attaching cells of dimension $ \geq 4$, so
$X^{(3)} = (E_3)^{(3)}$.  In fact, this is the way $E_3$ is constructed.  Moreover, we have $f |_{X^{(3)}} = p |_{(E_3)^{(3)}}$.

Now, note that $\xymatrix{E_3 \ar[r]^{p} & P}$ is a $K(G, 2)$ fibration which is not necessarily simple, that is  $\pi_1(P)\cong \pi$
does not have to act trivially on the homotopy group $G$ of the fiber.  We refer the reader to \cite{robinson} for the details of the
theory of non-simple fibrations.

There is a classifying space for  $K(G, 2)$-fibrations denoted by $\widehat{K}(G, 3)$ as described in \cite{robinson}.
Let $Q=K(\hepta G, 1)$ where $\hepta G$ is the group of isomorphisms of the Abelian group $G$.  The universal covering space
$\widetilde{Q}$ is contractible and $\hepta G$ acts freely on it.  Then
$$
\widehat{K}(G, 3) = (K(G, 3) \times \widetilde{Q})/ \hepta G .
$$
Here $K(G, 3)$ is interpreted as a topological(Abelian) group on which $\hepta G$ acts from the left.  There is a universal 
$K(G, 2)$-fibration over $\widehat{K}(G, 3)$ as described in \cite[Section 2]{robinson} which classifies $K(G, 2)$-fibrations.  
Hence there is a classifying map
$
\widehat{k_3}\colon P \to \widehat{K}(G, 3)
$
for $p \colon E_3 \to P$.  Moreover there is an obvious fibration
$$
\xymatrix{ K(G, 3)\ar[r] &  \widehat{K}(G, 3) \ar[r] ^(0.6)q & Q  ,}
$$
for which the null-element in $K(G, 3)$ gives a section $s \colon Q \to \widehat{K}(G, 3)$.

There is a $\pi_1(P)$ action on $G=K_2(f, \La)$ and hence there is a homomorphism
$$
\xymatrix{ \pi=\pi_1(X)\cong \pi_1(P) \ar[r] ^(0.66){\rho} &  \hepta G }
$$
inducing $B\rho \colon B\pi_1 \to Q$, such that
\eqncount
\begin{equation}   \label{principal G}
\begin{matrix}
\xymatrix{ P\ar[r]^(0.35){\widehat{k_3}} \ar[d]_\chi & \widehat{K}(G, 3) \ar[d]^q \\
B\pi_1\ar[r]^{B\rho} & Q}
\end{matrix}
\end{equation}
commutes.  Here $\chi$ classifies the universal covering $\widetilde{P} \to P $.

Suppose $X' \succ P$ is realized by $f' \colon X' \to P$.  Set $G'=K_2(f', \La)$ and
$Q'=K(\hepta G', 1)$.  We obtain a similar diagram
\eqncount
\begin{equation}  \label{principal G'}
\begin{matrix}
\xymatrix{ P\ar[r]^(0.35){\widehat{k}'_3} \ar[d]_\chi &  \widehat{K}(G', 3) \ar[d]^{q'} \\
B\pi_1\ar[r]^{B\rho'} & \ \ Q' \ .}
\end{matrix}
\end{equation}
\noindent
Recall the non-degenerate hermitian forms
$$
\lambda=\lambda_X |_G \colon G\times G \to \La  \quad  \textrm{and}  \quad  \lambda'=\lambda_{X'} |_{G'} \colon G'\times G' \to \La ,
$$
and let $p' \colon E'_3 \to P$ be the  $K(G', 2)$ fibration constructed from $X'$.

\begin{proposition} \label{isometry}
If $\Phi \colon G' \to G$ is an isometry, then $p' \colon E'_3 \to P$ and $p\colon E_3 \to P$ are fiber homotopy equivalent.
\end{proposition}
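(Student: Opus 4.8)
The plan is to compare $p$ and $p'$ through the classifying maps $\widehat{k}_3$ and $\widehat{k}'_3$ of Section 4, and to show that each of $p$, $p'$ is fiber homotopy equivalent over $P$ to the Borel construction $\widetilde P\times_\pi K(G,2)$; the only genuinely geometric input will be that $p$ (hence $p'$) admits a section, and that is where the degree one hypothesis on $f$ enters.

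First I would translate the isometry into a comparison of classifying spaces. An isometry $\Phi\colon G'\to G$ is in particular an isomorphism of $\La$-modules (and only this is used here), so conjugation by $\Phi$ gives an isomorphism $\hepta G'\cong\hepta G$, hence a homotopy equivalence $\psi\colon\widehat K(G',3)\to\widehat K(G,3)$ lying over a homotopy equivalence $\bar\psi\colon Q'\to Q$ and carrying the section $s'$ to the section $s$. The composite $\psi\circ\widehat{k}'_3\colon P\to\widehat K(G,3)$ classifies the $K(G,2)$-fibration $\Phi_*(p')$ obtained by pushing the fibres of $p'$ forward along $\Phi$, so it is enough to prove $\Phi_*(p')\simeq p$ over $P$. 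Since $\Phi$ is $\La$-linear, the actions $\rho'$ and $\rho$ of $\pi$ on $G'$ and $G$ correspond under conjugation by $\Phi$, so $\bar\psi\circ B\rho'\simeq B\rho$; composing with $\chi$ and using \eqref{principal G} and \eqref{principal G'}, both $\psi\circ\widehat{k}'_3$ and $\widehat{k}_3$ are lifts through $q\colon\widehat K(G,3)\to Q$ of the single map $B\rho\circ\chi\colon P\to Q$. Now lifts of $B\rho\circ\chi$ through $q$, with the section $s$ serving as a base point, are classified up to vertical homotopy by a difference class in $H^3(P;G)$ (local coefficients via the $\pi$-action on $G$), and the lift $s\circ B\rho\circ\chi$ classifies precisely $\widetilde P\times_\pi K(G,2)$, which has an obvious section. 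Hence a $K(G,2)$-fibration over $P$ inducing $\rho$ is fiber homotopy equivalent to $\widetilde P\times_\pi K(G,2)$ if and only if it admits a section, and the claim reduces to: $p\colon E_3\to P$ admits a section (the same argument then applies to $p'$, and a section of $p'$ is carried to one of $\Phi_*(p')$).

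This last step is the main obstacle. Pulling $p$ back along $f\colon X\to P$, the lift $f_3\colon X\to E_3$ (which satisfies $p\circ f_3=f$) defines a section $x\mapsto(x,f_3(x))$ of $f^*p\colon f^*E_3\to X$, so the single obstruction to a section of $f^*p$ — a class in $H^3(X;G)$ — vanishes. This obstruction is natural under pullback, so it equals $f^*$ of the corresponding obstruction for $p$, which lives in $H^3(P;G)$. Finally, $f$ is a degree one map of oriented $PD_4$-complexes, so $C_*(\widetilde X)\to C_*(\widetilde P)$ is split surjective up to $\La$-chain homotopy (the chain-level form of the splitting in Remark (5) of Section 1; cf.\ \cite{wallbook}), hence $f^*\colon H^3(P;G)\to H^3(X;G)$ is injective. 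Therefore the section obstruction for $p$ vanishes, $p$ has a section, and consequently $p$ and $\Phi_*(p')$ are both fiber homotopy equivalent to $\widetilde P\times_\pi K(G,2)$, hence to one another, which proves the proposition. What remains is routine: making the comparison maps strict, the naturality of the section obstruction, and the classification of lifts, all standard in the theory of non-simple fibrations of \cite{robinson}.
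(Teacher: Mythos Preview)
Your argument is correct, and the overall setup---transporting $\widehat{k}'_3$ through the equivalence induced by $\Phi$ and comparing the two classifying maps as lifts of $B\rho\circ\chi\colon P\to Q$, with the only obstruction lying in $H^3(P;G)$---matches the paper's. The divergence is in how that obstruction is disposed of. The paper simply observes that the whole group $H^3(P;G)$ vanishes: Poincar\'e duality gives $H^3(P;G)\cong H_1(P;G)$, and since $G=K_2(f,\La)$ is stably free (hence flat) while $H_1(\widetilde P)=0$, one has $H_1(P;G)=0$. You instead show that each of $p$ and $\Phi_*(p')$ is individually fiber homotopy equivalent to the Borel fibration $\widetilde P\times_\pi K(G,2)$, by proving that $p$ admits a section: the Postnikov lift $f_3$ furnishes a section of $f^*p$, so the primary obstruction in $H^3(X;G)$ vanishes, and the split injectivity of $f^*$ (coming from the degree-one property of $f$) forces the obstruction in $H^3(P;G)$ to vanish as well.

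Your route is longer but buys something: it never uses the stable freeness of $G$, only the factorisation $f=p\circ f_3$ and the Umkehr splitting for degree-one maps, so it would go through for any $\La$-module $G$ arising this way. The paper's route is a two-line computation once the obstruction group is identified, and is the more economical choice here since stable freeness of $K_2(f,\La)$ is already on record from Section~2.
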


\begin{proof}
The isometry $\Phi$ induces the following equivalences
$$
\xymatrix{\hepta G' \ar[r]_{\cong}^(0.5)a & \hepta G }, \  \xymatrix{ Q' \ar[r]_{\simeq}^(0.5)b & Q } \ \textrm{and } \
\xymatrix{ \widehat{K}(G', 3) \ar[r]_{\simeq}^(0.5)c & \widehat{K}(G, 3) } .
$$
Note also that  the definition of isometry includes commutativity of the following diagram:
$$
\xymatrix{ & \pi_1(X) \ar[r]^{\rho} \ar[dl]_{f_*} & \hepta G \ar[dd] \\
\pi_1(P) & & \\
& \pi_1(X') \ar[r]^{\rho'} \ar[ul]^{f'_*}& \hepta G' \ .}
$$
All these maps induce maps between the diagrams  (\ref{principal G}) and (\ref{principal G'}) when ${\widehat{k_3}}$ and ${\widehat{k'_3}}$
are deleted.  Therefore, we have $b \circ q' \circ {\widehat{k'_3}} = q \circ {\widehat{k_3}}$ which can be seen from the diagram below.
$$
\begin{matrix}
\xymatrix{
& P  \ar[dl]_{\chi} \ar[d]^{\widehat{k'_3}} &  P \ar[dr]^{\chi} \ar[d]_{\widehat{k_3}}  &\\
B\pi_1(P) \ar[d]_{B\rho'} & \widehat{K}(G', 3) \ar[r]^{c} \ar[d]^{q'} & \widehat{K}(G, 3)  \ar[d]^{q} & B\pi_1(P)\ar[d] ^{B\rho} \\
Q' \ar[r]^{=} & Q'  \ar[r]^{b} & Q \ar[r]^{=} & Q }
\end{matrix}
$$
All subdiagrams commute by the commutativity of the diagrams (\ref{principal G}) and (\ref{principal G'}), by the hypothesis and construction 
of the maps $b$ and $c$.  Moreover, the following diagram 
$$
\begin{matrix}
\xymatrix{
P  \ar[r] ^{=} \ar[d]_{\chi} &  P\ar[d]^{\chi} \\
B\pi_1(P) \ar[d]_{B\rho'} &  B\pi_1(P) \ar[d]^{B\rho} \\
Q' \ar[r]_{b} & Q}
\end{matrix}
$$
is commutative by hypothesis.

Hence we have 
$$
q \circ c \circ {\widehat{k'_3}} = b \circ q' \circ {\widehat{k'_3}} = 
b \circ B\rho' \circ \chi = B\rho \circ \chi = q \circ {\widehat{k_3}} \ .
$$ 
Recall that we have the following fibration :   
$\xymatrix{ K(G, 3)\ar[r] &  \widehat{K}(G, 3) \ar[r] ^(0.6)q & Q } $.  
Using obstruction theory as in \cite[Chapter 4]{baues}, particularly  $4.29$ (with local coefficients), the only obstruction for
$c \circ {\widehat{k'_3}} - {\widehat{k_3}}$ to be homotopic to the constant map belongs to
$$
H^3(P; \pi_3(G, 3)) = H^3(P; G) \cong H_1(P; G).
$$
The group on the right is trivial as  $-\otimes_{\La} G$ is right exact.  
Hence   $c \circ \widehat{k'_3} $, $\widehat{k_3} \colon P \to \widehat{K}(G, 3)$ are homotopic maps.  The result follows, since
$c$ is a homotopy equivalence.
\end{proof}

\begin{example}
Suppose $G=\oplus^m_1\La$, then $[K(G, 2)]^{(2)} = \vee^m_1(\vee_{g\in\pi_1}S^2_g)$ and $\pi_1$ acts on
$\vee_{g\in \pi_1}S^2_g$ by permutation.  This is the case when $\pi_1$ is the free group on $l$ generators, and
in this case $P=\sharp_1^l S^1\times S^3$.
\end{example}

Next, we are going to show that $p_* \colon \pi_3(E_3) \to \pi_3(P)$ is an isomorphism.  For this consider the following diagram 
of Whitehead sequences:
$$
\xymatrix{ 0 \ar[r] & \Ga(\pi_2(X)) \ar[r] \ar[d]_{\cong} & \pi_3(X) \ar[d] \ar[r] & H_3(X; \La) \ar[r] & 0 \\
& \Ga(\pi_2(E_3)) \ar[r] \ar[d] & \pi_3(E_3) \ar[r] \ar[d] & H_3(E_3; \La) \ar[r] & 0 \\
0 \ar[r] & \Ga(\pi_2(P)) \ar[r] & \pi_3(P) \ar[r] & H_3(P; \La) \ar[r] & 0 \ .}
$$
By Poincare duality, we have $ \xymatrix{p_* \circ (f_3)_* = f_* \colon H_3(X; \La)  \ar[r]^(0.66){\cong} & H_3(P; \La)}$ an isomorphism, 
hence  the map $\xymatrix{H_3(E_3; \La) \ar[r]^{p_*} & H_3(P; \La)}$ is surjective.  Since $\Ga(\pi_2(X)) = \Ga(\pi_2(P) \oplus G ) \to 
\Ga(\pi_2(P))$ is surjective too, note that  $\xymatrix{\pi_3(E_3) \ar[r]^{p_*} &\pi_3(P) }$ is also surjective.  By Property $(1)$, it is also 
injective, hence it must be an isomorphism.


\maketitle
\section{Classification Relative Order}

Let $X$, $X'$ and $P$ be $PD_4$-complexes such that $X\succ P$ and  $X'\succ P$ are realized by $f\colon X\to P$ and $f'\colon X'\to P$.   
As before we set $G=K_2(f, \La)$ and $G'=K_2(f', \La)$.

The question we want to consider in this section is,  when are $X$ and $X'$ homotopy equivalent over $P$?  In other words, does there
exist a homotopy equivalence  $h\colon X \to X'$  such that $f'\circ h$ is homotopic to $f$?
$$
\begin{matrix}
\xymatrix{ X \ar[rr]^h \ar[dr]_f & & X' \ar[dl]^{f'} \\
& P  &}
\end{matrix}
$$

Suppose such an $h$ exists, then the following sequences
\eqncount
\begin{equation}  \label{necessary}
\begin{matrix}
\xymatrix{ 0 \ar[r] & K_2(f, \La) \ar[r] \ar[d]_{\Phi}  &  H_2(X; \La) \ar[r] \ar[d]_{\Phi} & H_2(P; \La)  \ar[r] \ar[d]_{=} & 0 \\
0 \ar[r] & K_2(f', \La) \ar[r]  &  H_2(X'; \La) \ar[r]  & H_2(P; \La)  \ar[r]  & 0 }
\end{matrix}
\end{equation}
are isomorphic, namely $\Phi := h_*$ is an isometry.  We are going to prove that this condition is also sufficient.

\begin{theorem} \label{main}
Suppose there is an isometry $\Phi \colon H_2(X; \La) \to H_2(X'; \La) $  satisfying the diagram (\ref{necessary}).  
Then there is a homotopy equivalence $h \colon X \to X'$ over $P$ inducing $\Phi$.
\end{theorem}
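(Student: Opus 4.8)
The plan is to use the Postnikov data of Section 4 to reduce the construction of $h$ to a single obstruction in $\pi_3(X')$, and then to kill that obstruction by exploiting that $\Phi$ preserves the intersection forms. First I would pass to the kernels: since $\im s_f$ is $\lambda_X$-orthogonal to $G = K_2(f,\La)$ (and likewise for $f'$), the diagram (\ref{necessary}) together with the isometry property of $\Phi$ shows that $\Phi$ restricts to an isometry $\Phi_0 \colon (G,\lambda) \to (G',\lambda')$ of the nondegenerate hermitian forms of Section 4. Feeding $\Phi_0^{-1}$ into Proposition \ref{isometry} --- and reading off from its proof that the fibre homotopy equivalence is assembled from the maps $a$, $b$, $c$ induced there by the isometry --- I obtain a fibre homotopy equivalence $\psi \colon E_3 \to E'_3$ over $P$ which, after composition with a fibre-preserving self-equivalence of $E'_3$ if necessary, realizes $\Phi_0$ on fibres. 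Put $g := \psi \circ f_3 \colon X \to E'_3$; then $g$ is $3$-connected, $p' \circ g \simeq f$, and $g_* = \Phi$ on $H_2(-;\La)$ under the identifications $\pi_2(X) \cong H_2(X;\La)$ and $\pi_2(E'_3) \cong \pi_2(X') \cong H_2(X';\La)$.

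Next I would push $g$ into $X'$ on the $3$-skeleton. Since $f'_3 \colon X' \to E'_3$ is $3$-connected the pair $(E'_3,X')$ is $3$-connected, so after a homotopy $g$ carries $X^{(3)}$ into $(E'_3)^{(3)} = X'^{(3)}$; write $h_0 \colon X^{(3)} \to X'$ for the resulting map, so $f'_3 \circ h_0 \simeq g|_{X^{(3)}}$ and $h_0$ induces an isomorphism on $\pi_1$ and the restriction of $\Phi$ on $\pi_2$. By \cite{wallbook} each $PD_4$-complex is a $3$-complex with a single $4$-cell attached, $X \simeq X^{(3)} \cup_\varphi D^4$ and $X' \simeq X'^{(3)} \cup_{\varphi'} D^4$, and the only obstruction to extending $h_0$ over the top cell of $X$ to a map $h \colon X \to X'$ is the class $o(h_0) := [h_0 \circ \varphi] \in \pi_3(X')$. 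Note that I am still free to replace $h_0$ by $h_0 \circ e$ for a self-homotopy equivalence $e$ of $X^{(3)}$ lying over $P^{(3)}$, which changes $o(h_0)$ within a controllable coset.

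The core of the proof is to arrange $o(h_0) = 0$. Because $g$ is defined on all of $X$, the map $g \circ \varphi \colon S^3 \to E'_3$ is null-homotopic, so $(f'_3)_* o(h_0) = [g \circ \varphi] = 0$ and hence $o(h_0) \in \ker\big((f'_3)_* \colon \pi_3(X') \to \pi_3(E'_3)\big)$. Comparing the Whitehead exact sequences of $X'$ and $E'_3$ as at the end of Section 4 --- where $\Ga(\pi_2(X')) \to \Ga(\pi_2(E'_3))$ is an isomorphism since $(f'_3)_*$ is an isomorphism on $\pi_2$ --- identifies this kernel with $\ker\big(H_3(X';\La) \to H_3(E'_3;\La)\big)$. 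I would then introduce an auxiliary map $A'$, a chain- or homotopy-level lift of the duality isomorphism $\cap\,[X']$ compared along $h_0$ with $\cap\,[X]$, in order to identify the image of $o(h_0)$ in this kernel --- modulo the indeterminacy from the self-equivalences $e$ --- with an explicit expression in $\lambda$, $\lambda'$ and the Poincar\'e duality data of the split sequence of Section 1. Since $\Phi$ is an isometry it intertwines $\lambda$ with $\lambda'$, and by naturality of the cup and cap products (as in Lemma \ref{composition}(b)) it intertwines the duality data, so the expression vanishes for a suitable choice of $e$; thus $o(h_0) = 0$ and $h_0$ extends to $h \colon X \to X'$.

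It remains to verify $h$ is the desired equivalence. By construction $f' \circ h = p' \circ f'_3 \circ h \simeq p' \circ g \simeq f$ --- adjusting the chosen extension over the top cell so that this holds up to homotopy --- while $h$ is an isomorphism on $\pi_1$ and $h_* = \Phi$ on $H_2(-;\La)$. Since $h$ has degree one over $P$, the split short exact sequence and the Poincar\'e duality squares recalled in Section 1 force $h_*$ to be an isomorphism on $H_*(-;\La)$ in all degrees, so $h$ is a homotopy equivalence by the Hurewicz--Whitehead theorem. I expect the decisive difficulty to be the third paragraph: constructing $A'$ and pinning down the secondary obstruction $o(h_0)$ in terms of intersection forms and cap products precisely enough --- while correctly accounting for the indeterminacy coming from self-equivalences of $X^{(3)}$ over $P^{(3)}$ --- to conclude that it vanishes. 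This is exactly the point at which the hypothesis on the $k$-invariant used in \cite{hillman 06} is avoided.
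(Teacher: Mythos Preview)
The overall architecture is right --- use Proposition \ref{isometry} to get a fibre homotopy equivalence $E_3 \simeq E'_3$, restrict to $3$-skeleta, identify a single extension obstruction, and kill it using the isometry --- and your final paragraph is fine. But the core of the argument, your third paragraph, has a genuine gap.

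You correctly observe that $(f'_3)_* o(h_0) = 0$ and correctly identify $\ker\big((f'_3)_*\colon \pi_3(X')\to\pi_3(E'_3)\big)$ with $\ker\big(H_3(X';\La)\to H_3(E'_3;\La)\big)$ via Hurewicz (the $\Ga$-parts being isomorphic). But this locates the obstruction in the $H_3$-part of the Whitehead sequence, which is exactly the \emph{wrong} place for the map $A'$. In the paper $A'$ is a map
\[
\Ga(\pi_2(X'^{(3)}))\otimes_\La\bZ \longrightarrow \Hom_\La\big(H^2(X'^{(3)};\La),\,H_2(X'^{(3)};\La)\big),
\]
defined by attaching a $4$-cell along $\beta\in\Ga(\pi_2)$ and taking cap product with the resulting top class; it is defined on $\Ga$, not on $H_3$. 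So before any such $A'$ can be invoked you need a \emph{separate} argument that the Hurewicz image of the obstruction is zero, i.e.\ that $w=\overline h_*[\partial D^4]-[\partial D'^4]$ lies in $\Ga(\pi_2(X'^{(3)}))$. The paper does this directly from the degree-$1$ property, by showing $f_*$ and $f'_*$ induce isomorphisms $H_3(X^{(3)};\La)\cong H_3(P^{(3)};\La)\cong H_3(X'^{(3)};\La)$ and hence $\overline h_*[\partial D^4]=[\partial D'^4]$ in $H_3$. You have skipped this step entirely. Once the obstruction is known to lie in $\Ga(\pi_2(X'))\otimes_\La\bZ$, one splits $\Ga(\pi_2(X'))\cong\Ga(\pi_2(P))\oplus(\pi_2(P)\otimes G')\oplus\Ga(G')$; your observation about $\ker((f'_3)_*)$ (which is essentially the paper's Lemma \ref{image}) kills the $\Ga(\pi_2(P))$ summand, and then the injective components $A'_2,A'_3$ of $A'$ --- injective because $G'$ is stably free --- kill the other two, using precisely that $\Phi$ intertwines $\hat\lambda_{X|_G}$ with $\hat\lambda_{X'|_{G'}}$. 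Your description of $A'$ as ``a chain- or homotopy-level lift of the duality isomorphism'' does not capture this construction.

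Finally, the self-equivalences $e$ of $X^{(3)}$ and the ``indeterminacy'' you invoke are unnecessary here; that is the device from Hillman's approach. The point of the paper's argument is exactly that $A'$ lets one show the obstruction vanishes outright, without modifying $h_0$ and without any hypothesis on the $k$-invariant.
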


\begin{remark}
We should point out that our result gives a classification over the complex $P$, whereas Baues and Bleile \cite{baues and bleile} 
give classification result over $B\pi$ and Hillman \cite{hillman 06,hillman 13}  gives a classification result over the strongly minimal model.
\end{remark}

\begin{proof}
Since there is an isometry $\Phi \colon K_2(f; \La) \to K_2(f'; \La)$, we have a homotopy equivalence $g$ between the Postnikov 
systems
$$
\begin{matrix}
\xymatrix{E_3\ar[rr]^g \ar[dr]_p & &E'_3 \ar[dl]^{p'} \\
& P  &}
\end{matrix}
$$
We are going to denote $g |_{E_3^{(3)}}$ by  $\overline{h}$, i.e., $\overline{h} \colon E_3^{(3)} = X^{(3)} \to X'^{(3)} =E'^{(3)}_3$ 
such that $\overline{h} = g |_{E_3^{(3)}}$.
Let $X= X^{(3)} \cup_{\varphi} D^4$ and  $X'=X'^{(3)} \cup_{\varphi'} D'^4$, where  $\varphi \colon S^3 \to X^{(3)} $ and 
$\varphi' \colon S^3 \to X'^{(3)}$ are the attaching maps of the $4$-cells \cite[Theorem 2.4]{wall}.  For simplicity, we denote
$\varphi(S^3)=\partial D^4$ and $\varphi' (S^3)=\partial D'^4$.  The obstruction to extend $\overline{h}$ over $X$ belongs to
$$
H^4(X; \pi_3(X'))\cong H_0(X; \pi_3(X')) = \pi_3(X') \otimes_{\La} \bZ .
$$
This obstruction is given by $\overline{w}=w \otimes_{\La} 1= (\overline{h}_*[\partial D^4] - [\partial D'^4]) \otimes_{\La} 1 $.
We first consider  the difference $w:=\overline{h}_*[\partial D^4] - [\partial D'^4] \in \pi_3(X'^{(3)})$.

\begin{lemma}
The class $w \in \pi_3(X'^{(3)})$ maps to zero under the Hurewicz homomorphism $\pi_3(X'^{(3)}) \to H_3(X'^{(3)}; \La)$.
\end{lemma}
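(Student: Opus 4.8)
The plan is to compute the Hurewicz image of $w$ explicitly and to check that it vanishes, using that $\overline{h}$ is the restriction of a map over $P$ and that $X$, $X'$ are themselves $PD_4$-complexes. Since $X'^{(3)}$ is a $3$-dimensional complex, $H_3(X'^{(3)};\La)$ is just the module $Z_3$ of cellular $3$-cycles of its universal cover. Writing $X=X^{(3)}\cup_{\varphi}D^4$ and $X'=X'^{(3)}\cup_{\varphi'}D'^4$, the Hurewicz image of $[\partial D^4]=[\varphi]$ is the cellular boundary $c:=\partial_4(D^4)$ of the top cell of $X$, and that of $[\partial D'^4]$ is $c':=\partial_4(D'^4)$; by naturality and additivity of the Hurewicz map the image of $w$ is $\overline{h}_{\#}(c)-c'\in Z_3$. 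So the lemma reduces to the chain-level identity $\overline{h}_{\#}(c)=c'$.

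Next I would pass to $P$. From Section $4$ we have $f|_{X^{(3)}}=p|_{(E_3)^{(3)}}$, and likewise $f'|_{X'^{(3)}}=p'|_{(E'_3)^{(3)}}$, while $\overline{h}=g|_{(E_3)^{(3)}}$ for $g\colon E_3\to E'_3$ a map over $P$; after the usual adjustments making $g$ cellular and fibre-preserving ($p'g=p$) one gets $f'\circ\overline{h}=f|_{X^{(3)}}$, hence $f'_{\#}\circ\overline{h}_{\#}=f_{\#}$ on $C_{\le 3}$. Choose orientations and lifts of cells so that $f_{\#}$ (resp.\ $f'_{\#}$) carries the top cell of $X$ (resp.\ of $X'$) to the top cell $e_P$ of $P$. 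Then $f'_{\#}(\overline{h}_{\#}(c))=f_{\#}(c)=f_{\#}(\partial_4 D^4)=\partial_4 e_P=:c_P=f'_{\#}(c')$, so $\overline{h}_{\#}(c)-c'\in\ker f'_{\#}$. Moreover $c_P$ is a boundary in the universal cover of $P$, so $\overline{h}_{\#}(c)$ maps to $0$ in $H_3(P;\La)$; since $f'_{*}\colon H_3(X';\La)\to H_3(P;\La)$ is an isomorphism (Poincar\'e duality and degree one, as recalled in Section $4$), $\overline{h}_{\#}(c)$ maps to $0$ in $H_3(X';\La)$ as well, i.e.\ $\overline{h}_{\#}(c)$ lies in the module of boundaries $B_3=\La\cdot c'$; trivially $c'\in\La\cdot c'$, so $\overline{h}_{\#}(c)-c'\in\La\cdot c'$.

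Finally I would show that $f'_{\#}$ is injective on $\La\cdot c'$, which then forces $\overline{h}_{\#}(c)-c'=0$. Inside the top chain group $C_4\cong\La$ one has $\operatorname{ann}_{\La}(c')=H_4(X';\La)$ and $\operatorname{ann}_{\La}(c_P)=H_4(P;\La)$ (these are the kernels of $\partial_4$, there being no $5$-cells above). Because $f'$ is a degree-one map of $PD_4$-complexes inducing an isomorphism on $\pi_1$, $f'_{*}\colon H_4(X';\La)\to H_4(P;\La)$ is an isomorphism; reading this off inside $C_4$ gives $\operatorname{ann}_{\La}(c')=\operatorname{ann}_{\La}(c_P)$, and since $f'_{\#}$ sends $x\cdot c'\mapsto x\cdot c_P$ it is injective on $\La\cdot c'$. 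Hence $\overline{h}_{\#}(c)=c'$, so the Hurewicz image of $w$ is zero.

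The step I expect to be the real obstacle is the normalization in the middle paragraph: matching the top cells of $X$ and $X'$ with that of $P$ at the chain level, not merely up to homology. Without it one only concludes $\overline{h}_{\#}(c)\in\La\cdot c'$, and $\La\cdot c'$ is in general nonzero, so the precise vanishing genuinely needs more. This is where the $PD_4$-hypotheses on $X$ and $X'$ are used (the intermediate complexes $E_3$, $E'_3$ are not $PD_4$), through the chain-level behaviour of the degree-one maps on fundamental classes; alternatively, and perhaps more transparently, one identifies $c$ with an expression built from the intersection form $\lambda_X$ and cap products — the map $A'$ advertised in the introduction — and then invokes that $\overline{h}$ is an isometry.
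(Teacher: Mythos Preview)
Your argument is essentially correct, but it is a chain-level reinvention of a much shorter homology-level argument, and the worry you flag at the end is a non-issue.

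The paper proceeds as follows. Writing $P=P^{(3)}\cup_{\psi}D^4$, one compares the long exact sequences of the pairs $(X,X^{(3)})$, $(X',X'^{(3)})$, $(P,P^{(3)})$ in $\La$-homology. Since $f$ and $f'$ are degree one and induce $\pi_1$-isomorphisms, Poincar\'e duality gives that $f_*,f'_*$ are isomorphisms on $H_3(-;\La)$ and $H_4(-;\La)$, and on $H_4$ of the pairs (both sides free of rank one). The five-lemma then yields directly that
\[
f_*\colon H_3(X^{(3)};\La)\xrightarrow{\ \cong\ }H_3(P^{(3)};\La),\qquad
f'_*\colon H_3(X'^{(3)};\La)\xrightarrow{\ \cong\ }H_3(P^{(3)};\La).
\]
From $f'_*\circ\overline{h}_*=f_*$ and $f_*[\partial D^4]=[\partial\psi]=f'_*[\partial D'^4]$ one immediately gets $\overline{h}_*[\partial D^4]=[\partial D'^4]$, i.e.\ the Hurewicz image of $w$ vanishes.

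Your steps 3 and 4 (placing $\overline{h}_\#(c)-c'$ in $\La\cdot c'$ and then proving $f'_\#$ is injective there via annihilators and $H_4$) are precisely a hand-built proof of the injectivity of $f'_*$ on $H_3(X'^{(3)};\La)$, which the five-lemma gives for free. So the two arguments coincide once unpacked; yours just does the bookkeeping at the chain level.

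Two further comments. First, the ``normalization'' you worry about is harmless and is implicitly used in the paper as well: one simply chooses the lifts of the single $4$-cells so that the degree-one maps send generator to generator in $H_4$ of the pair; no delicate chain-level matching beyond this is required. Second, your closing suggestion to bring in the map $A'$ and the intersection form is misdirected here: in the paper $A'$ is used only afterwards, to kill the components $\overline{w_2}$ and $\overline{w_3}$ of the obstruction inside $\Gamma(\pi_2(X'))$, not to prove this lemma.
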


\begin{proof}
We write $P=P^{(3)}\cup_{\psi} D^4$ and we have the following isomorphisms
$$
\xymatrix{f_* \colon H_4(X, X^{(3)}; \La) \ar[r]^(0.55){\cong} & H_4(P, P^{(3)}; \La), \\
f'_* \colon H_4(X', X'^{(3)}; \La) \ar[r]^(0.55){\cong} & H_4(P, P^{(3)}; \La) }
$$
by the degree-$1$ property of $f$ and  $f'$, respectively.  Consider the diagram
$$
\xymatrix{\pi_4(X, X^{(3)})=H_4(X, X^{(3)}; \La)\ar[r] \ar[d]& H_3(X^{(3)}; \La) \ar[r] \ar[d]& H_3(X; \La) \ar[r] \ar[d]^{f_*}_{\cong}& 0\\
\pi_4(P, P^{(3)})=H_4(P, P^{(3)}; \La)\ar[r] & H_3(P^{(3)}; \La) \ar[r] & H_3(P; \La) \ar[r]& 0\\
\pi_4(X', X'^{(3)})=H_4(X', X'^{(3)}; \La)\ar[r] \ar[u]& H_3(X'^{(3)}; \La) \ar[r] \ar[u]& H_3(X'; \La) \ar[r] \ar[u]_{f'_*}^{\cong}& 0 \ .}
$$
The vertical maps are induced by $f$ and $f'$, respectively.  The rightmost and leftmost vertical maps are isomorphisms because of 
Poincar\'{e} duality and the degree-$1$ properties.   Hence we have the following isomorphisms
$$
\xymatrix{f_* \colon H_3(X^{(3)}; \La) \ar[r]^(0.55){\cong} & H_3(P^{(3)}; \La), \\
f'_* \colon H_3(X'^{(3)}; \La) \ar[r]^(0.55){\cong} & H_3(P^{(3)}; \La)  .}
$$
It follows that $f_* [\partial D^4] = f'_* [\partial D'^4]$.  Also the diagram below commutes.
$$
\begin{matrix}
\xymatrix{ H_3(X^{(3)}; \La) \ar[rr]^{\overline{h}_*} \ar[dr]^{f_*}_{\cong} &  &  H_3(X'^{(3)}; \La) \ar[dl]_{f'_*} ^{\cong} \\
& H_3(P^{(3)}; \La)  &}
\end{matrix}
$$
Hence $f'_* \circ \overline{h}_* [\partial D^4] = f_* [\partial D^4] = f' _* [\partial D'^4]$ which implies 
$\overline{h}_* [\partial D^4] - [\partial D'^4] = 0 \in H_3(X'^{(3)}; \La)$.
\end{proof}

Note that if $w \in \pi_3(X'^{(3)})$ is zero, then $\overline{h}$ extends to a map $h\colon X \to X'$ making the diagram 
$$
\begin{matrix}
\xymatrix{ X\ar[rr]^h \ar[dr]_f & & X' \ar[dl]^{f'} \\
 & P  &}
\end{matrix}
$$
commutative up to homotopy.  The above arguments show that the map $h$ is then of degree $1$, and it follows from this that 
$h$ is a homotopy equivalence.

From the Whitehead sequence \cite{whitehead 52}
$$
\xymatrix{0 \ar[r] & \Ga(\pi_2(X'^{(3)})) \ar[r] & \pi_3(X'^{(3)}) \ar[r] & H_3(X'^{(3)}; \La) \ar[r] & 0 , }
$$
it follows that $w\in \Ga(\pi_2(X'^{(3)})) = \Ga(\pi_2(X'))$ which has a decomposition
$$
\Ga(\pi_2(X'))\cong \Ga(G' \oplus \pi_2) \cong \Ga(\pi_2(P)) \oplus \pi_2(P) \otimes G' \oplus \Ga(G') .
$$
We write $w=w_1 + w_2 + w_3 $ according to this decomposition and 
since it suffices to show that $w\otimes_{\La} 1 \in \pi_3(X'^{(3)}) \otimes_{\La} \bZ$ is zero, we have to prove:
\begin{itemize}
\item[(1)] $\overline{w_1} = w_1 \otimes_{\La} 1 = 0 \in \Ga(\pi_2(P)) \otimes_{\La} \bZ$,
\item[(2)] $\overline{w_2} = w_2 \otimes_{\La} 1 = 0 \in G' \otimes_{\La} \pi_2(P) = G' \otimes_{\La} H_2(P; \La)$,
\item[(3)] $\overline{w_3} = w_3 \otimes_{\La} 1 = 0 \in \Ga(G') \otimes_{\La} \bZ $.
\end{itemize}

We are going to consider the above components  one by one.  

\begin{lemma}\label{image}
The element $\overline{w} \in  \pi_3(X'^{(3)}) \otimes_{\La} \bZ$ maps to zero under the map induced by $f'$,
$$
f'_* \colon \pi_3(X'^{(3)}) \otimes_{\La} \bZ \to \pi_3(P^{(3)})\otimes_{\La} \bZ .
$$
\end{lemma}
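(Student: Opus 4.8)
The plan is to reduce $f'_*(\overline w)$ to a $\La$-multiple of the homotopy class of the attaching map of the top cell of $P$, and then to kill that multiple by the degree-one hypotheses. Recall that $\overline h = g|_{X^{(3)}}$, where $g\colon E_3\to E'_3$ is the fiber homotopy equivalence over $P$ produced above; since $X^{(3)}=(E_3)^{(3)}$, $X'^{(3)}=(E'_3)^{(3)}$, and $f|_{X^{(3)}}=p|_{(E_3)^{(3)}}$, $f'|_{X'^{(3)}}=p'|_{(E'_3)^{(3)}}$, choosing $g$ (and $p$, $p'$) cellular and $g$ a strict map over $P$ gives $f'|_{X'^{(3)}}\circ\overline h=f|_{X^{(3)}}$ as maps $X^{(3)}\to P^{(3)}$. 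Hence, writing $P=P^{(3)}\cup_\psi D^4$ and $[\psi]\in\pi_3(P^{(3)})$ for the class of the attaching map,
$$
f'_*(w)=f'_*\!\big(\overline h_*[\partial D^4]\big)-f'_*[\partial D'^4]=f_*[\partial D^4]-f'_*[\partial D'^4]\in\pi_3(P^{(3)}),
$$
so it remains to evaluate the two terms on the right.

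For this I would use the relative Hurewicz theorem. Each of the pairs $(X,X^{(3)})$, $(X',X'^{(3)})$, $(P,P^{(3)})$ is $3$-connected (a single $4$-cell is attached), so, passing to universal covers, one gets $\La$-module isomorphisms $\pi_4(X,X^{(3)})\cong H_4(X,X^{(3)};\La)\cong\La$ with free generator the class of the characteristic map of the $4$-cell, which $\partial$ carries to $[\partial D^4]$; likewise $\partial(e^4_P)=[\psi]$, and similarly for $X'$. Taking $f,f'$ cellular, the induced $\La$-linear maps $\pi_4(X,X^{(3)})\to\pi_4(P,P^{(3)})$ and $\pi_4(X',X'^{(3)})\to\pi_4(P,P^{(3)})$ are multiplication by elements $u,u'\in\La$; applying $-\otimes_\La\bZ$ turns these into the maps on $H_4(\,\cdot\,,\,\cdot\,;\bZ)\cong\bZ$ induced by $f$ and $f'$, i.e. multiplication by the degrees, so $\varepsilon(u)=\varepsilon(u')=1$, where $\varepsilon\colon\La\to\bZ$ is the augmentation. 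By naturality of $\partial$ this yields $f_*[\partial D^4]=u[\psi]$ and $f'_*[\partial D'^4]=u'[\psi]$, hence $f'_*(w)=(u-u')[\psi]$ with $\varepsilon(u-u')=0$. Thus $u-u'$ lies in the augmentation ideal, and therefore in $\pi_3(P^{(3)})\otimes_\La\bZ$ we get $f'_*(\overline w)=(u-u')[\psi]\otimes_\La 1=\varepsilon(u-u')\big([\psi]\otimes_\La 1\big)=0$, which is the assertion.

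The step I expect to demand the most care is the first one: one must make sure that $f'|_{X'^{(3)}}\circ\overline h$ and $f|_{X^{(3)}}$ agree as maps into $P^{(3)}$ — not merely up to free homotopy inside $P$ — since this is exactly what lets the whole computation take place in $\pi_3(P^{(3)})$ rather than only in $\pi_3(P)$; here one uses that the fiber homotopy equivalence $g$ may be taken to be a strict cellular map over $P$ (if necessary, homotope it to one via the homotopy lifting property of $p'$). The other, more routine, point to get right is the identification of the chain-level degree of $f$ on the top cell with $\varepsilon(u)$; granting that, the vanishing of the obstruction after tensoring with $\bZ$ falls out of $\deg f=\deg f'=1$. (One could phrase the argument with $\pi_4$ of the relevant pairs replaced by the top-dimensional cellular chain groups, but the substance is the same.)
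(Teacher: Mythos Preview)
Your argument is correct and follows essentially the same route as the paper: both proofs use the boundary map from the relative $\pi_4\cong H_4(\,\cdot\,,\,\cdot^{(3)};\La)$, the commutativity $f'_*\overline h_*=f_*$ at the level of $3$-skeleta, and the degree-$1$ hypothesis to conclude that $f'_*(\overline w)=0$. The paper packages this via the composite $\overline\partial_Y\colon H_4(Y;\bZ)\to\pi_3(Y^{(3)})\otimes_\La\bZ$ sending $[Y]\mapsto[\alpha]\otimes 1$, which lets one work directly in the tensored groups and avoids naming your elements $u,u'\in\La$, but the content is the same.
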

\begin{proof}
Recall that $\overline{w}= w\otimes_{\La}1= (\overline{h}_*[\partial D^4] - [\partial D'^4]) \otimes_{\La} 1$.  
Now, for every oriented $PD_4$-complex $Y$ with $Y= Y^{(3)}\cup_{\alpha} D^4$ the composite map $\overline{\partial}_Y$,
$$
\xymatrix{H_4(Y; \bZ)\ar@/^2pc/[rr]^{\overline{\partial}_Y} \ar[r]& H_4(Y, Y^{(3)})\otimes_{\La}\bZ=\pi_4(Y, Y^{(3)})\otimes_{\La}\bZ \ar[r]& 
\pi_3(Y^{(3)})\otimes_{\La}\bZ,}
$$
sends $[Y]$ to $[\alpha]\otimes_{\La}1$.  Consider the commutative diagram

\eqncount
\begin{equation}  \label{ladder}
\begin{matrix} 
\xymatrix{H_4(X; \bZ) \ar[r]^{f_*} \ar[d]_{\overline{\partial}_X} &  H_4(P; \bZ)  \ar[d] & H_4(X'; \bZ)  \ar[l]_{f'_*}\ar[d]^{\overline{\partial}_{X'}} \\
\pi_3(X^{(3)}) \otimes_{\La}\bZ \ar[r]_{f_*} & \pi_3(P^{(3)}) \otimes_{\La}\bZ & \pi_3(X'^{(3)}) \otimes_{\La}\bZ \ar[l]^{f'_*}  \ .}
\end{matrix}
\end{equation}
Since the diagram
$$
\xymatrix{X^{(3)} \ar[rr]^{\overline{h}} \ar[dr]_{f} & & X'^{(3)} \ar[dl]^{f'} \\
& P &}
$$
is homotopy commutative, we have $f'_* \circ \overline{h}_* = f_*$ also in the lower line of (\ref{ladder}).  The result then follows because we have 
$f_*[X]=[P]=f'_*[X']$.
\end{proof}

We have the following diagram of Whitehead sequences 
$$
\xymatrix{ & 0\ar[d] & 0\ar[d] & & \\
 & \Ga(G')\oplus \pi_2(P^{(3)})\otimes G' \ar[d]& \Omega \ar[d]& & \\
 0\ar[r] & \Ga(\pi_2(X^{(3)})) \ar[r] \ar[d]_{\Ga(f'_*)} & \pi_3(X'^{(3)}) \ar[d]^{f'*} \ar[r] & H_3(X'^{(3)}; \La) \ar[r] \ar[d]^{f'^*}_{\cong} & 0 \\ 
 & \Ga(\pi_2(P^{(3)})) \ar[r] \ar[d] & \pi_3(P^{(3)}) \ar[r] \ar[d] & H_3(P'^{(3)}; \La) \ar[r] & 0 \\
& 0 & 0 & & }
$$
Note that $\Ga(f'_*)$ is induced from the split surjective homomorphism 
$$
\xymatrix{\pi_2(X^{(3)})\cong H_2(X'; \La) \ar[r]^{f'_*} & H_2(P;\La)\cong \pi_2(P^{(3)}),}
$$
so $\Ga(f'_*)$ is split surjective, too.  Therefore $f'_*\colon \pi_3(X'^{(3)}) \to \pi_3(P'^{(3)})$ is surjective with
$$
\Kernel = \Omega \cong  \Ga(G')\oplus \pi_2(P^{(3)})\otimes G' .
$$
Now $-\otimes_{\La}\bZ$ is right exact, so we have exactness of
$$
\Omega\otimes_{\La}\bZ \to \pi_3(X'^{(3)})\otimes_{\La}\bZ \to \pi_3(P'^{(3)})\otimes_{\La}\bZ \to 0 .
$$ 
It follows from Lemma \ref{image} that the obstruction 
$$
(\overline{h}_*[\partial D^4] - [\partial D'^4]) \otimes_{\La} 1 \in 
\im(\Omega\otimes_{\La}\bZ \to \pi_3(X'^{(3)})\otimes_{\La}\bZ),
$$
that is, it comes from $\Ga(G')\otimes_{\La}\bZ \oplus \pi_2(P^{(3)})\otimes_{\La}G'$.  This immediately implies that the component $\overline{w_1} \in \Ga(\pi_2(P)) \otimes_{\La} \bZ$ should vanish.
\begin{corollary}
The component $\overline{w_1} = w_1 \otimes_{\La} 1 = 0$.
\end{corollary}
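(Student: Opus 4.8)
\emph{Approach.} I would treat the statement as an essentially immediate consequence of the short exact sequence and the commutative diagram of Whitehead sequences assembled just above it, so that the only real work is to make the bookkeeping explicit and to isolate the one point needing an extra word. The plan is: (i) recall that $\overline w$ has already been shown to lie in the image of $\Om\otimes_\La\bZ$ inside $\pi_3(X'^{(3)})\otimes_\La\bZ$; (ii) identify $\Om$ as the direct summand of $\Ga(\pi_2(X'^{(3)}))$ complementary to the summand $\Ga(\pi_2(P))$ in which $w_1$ lives; and (iii) conclude that the $\Ga(\pi_2(P))\otimes_\La\bZ$-component $\overline{w_1}$ of $\overline w$ must vanish.

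\emph{Steps.} For (i) I would apply the right exact functor $-\otimes_\La\bZ$ to $0\to\Om\to\pi_3(X'^{(3)})\xrightarrow{f'_*}\pi_3(P^{(3)})\to0$ and invoke Lemma \ref{image} (which gives $f'_*(\overline w)=0$), obtaining $\overline w\in\im(\Om\otimes_\La\bZ\to\pi_3(X'^{(3)})\otimes_\La\bZ)$. For (ii): since $f'_*\colon\pi_2(X'^{(3)})\cong\pi_2(P^{(3)})\oplus G'\to\pi_2(P^{(3)})$ is split surjective, the natural splitting of $\Ga$ applied to a direct sum gives
$$
\Ga(\pi_2(X'^{(3)}))\cong\Ga(\pi_2(P))\oplus\pi_2(P)\otimes G'\oplus\Ga(G'),
$$
on which $\Ga(f'_*)$ is the identity on $\Ga(\pi_2(P))$ and zero on the other two summands; hence $\Om=\Ker\Ga(f'_*)=\pi_2(P)\otimes G'\oplus\Ga(G')$, the inclusion $\Ga(\pi_2(P))\hookrightarrow\Ga(\pi_2(X'^{(3)}))$ is a splitting of $\Ga(f'_*)$, and by definition $w_1=\Ga(f'_*)(w)$ is the $\Ga(\pi_2(P))$-component of $w$. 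For (iii) I would tensor the naturality square of the two Whitehead sequences in the displayed diagram preceding the statement with $\bZ$ and use Lemma \ref{image} once more to obtain
$$
(\iota'\otimes1)(\overline{w_1})=(\iota'\otimes1)\bigl((\Ga(f'_*)\otimes1)(w\otimes_\La1)\bigr)=f'_*(\overline w)=0,
$$
where $\iota'\colon\Ga(\pi_2(P^{(3)}))\hookrightarrow\pi_3(P^{(3)})$ is the Whitehead monomorphism; equivalently, since $\Om\otimes_\La\bZ$ is exactly the summand of $\Ga(\pi_2(X'^{(3)}))\otimes_\La\bZ$ complementary to $\Ga(\pi_2(P))\otimes_\La\bZ$ and $\overline w$ comes from $\Om\otimes_\La\bZ$, comparing components forces $\overline{w_1}=0$.

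\emph{Main obstacle.} The only non-formal point is this last implication: passing from ``$\overline w$ comes from $\Om\otimes_\La\bZ$'' (a statement in $\pi_3(X'^{(3)})\otimes_\La\bZ$) to ``$\overline{w_1}=0$'' (a statement in $\Ga(\pi_2(P))\otimes_\La\bZ$) requires that the Whitehead monomorphism $\Ga(\pi_2)\hookrightarrow\pi_3(-)$ remain injective after applying $-\otimes_\La\bZ$, i.e.\ that the connecting $\mathrm{Tor}_1^{\La}$-term between $H_3$ and $\Ga(\pi_2)$ vanish. This holds as soon as the Whitehead exact sequence of the relevant $3$-complex is tensor-split over $\La$, which I would verify (or cite) for the complexes at hand; everything else in the argument is routine manipulation of finite direct sums and naturality of the Whitehead sequence.
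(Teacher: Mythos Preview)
Your proposal follows the paper's own argument essentially step for step: the paper tensors the short exact sequence $0\to\Om\to\pi_3(X'^{(3)})\to\pi_3(P^{(3)})\to0$ with $\bZ$, invokes Lemma~\ref{image} to place $\overline w$ in the image of $\Om\otimes_\La\bZ$, and then declares that ``this immediately implies'' $\overline{w_1}=0$. Your steps (i)--(iii) reproduce exactly this, and you are right that the only substantive point is the passage in (iii).

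Your ``main obstacle'' is well spotted, and it is worth noting that the paper does not address it either: the statement ``$\overline w$ comes from $\Om\otimes_\La\bZ$'' lives in $\pi_3(X'^{(3)})\otimes_\La\bZ$, while $\overline{w_1}$ is a component in $\Ga(\pi_2(P))\otimes_\La\bZ$, and bridging the two requires either that $\iota'\otimes 1\colon\Ga(\pi_2(P^{(3)}))\otimes_\La\bZ\to\pi_3(P^{(3)})\otimes_\La\bZ$ remain injective, or an equivalent splitting/Tor-vanishing statement. The paper's ``immediately implies'' suppresses precisely this point. So your proposal is at least as complete as the paper's, and your instinct to isolate and justify this step (rather than treat it as formal) is the right one; a clean way to handle it is to observe that what is ultimately needed for Theorem~\ref{main} is only the vanishing of the image of $\overline w$ in $\pi_3(X')\otimes_\La\bZ$, so one can carry the argument entirely at the $\pi_3$-level and avoid the Tor issue altogether.
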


To further analyze the obstruction we will use a map $A'$ 
which relates our obstruction $\overline{w}$ to intersection forms and cap products.  For conveniency 
we shall give the details in the following remark.  

\begin{remark} \label{connection}
Let $Y=Y^{(3)} \cup_{\alpha} D^4$ be an oriented $PD_4$-complex with $\alpha \colon S^3 \to Y^{(3)} \in \pi_3(Y^{(3)})$.  
Given $\beta \colon S^3\to Y^{(3)} \in \pi_3(Y^{(3)})$, we denote the complexes $Y_{\beta}= Y^{(3)}\cup_{\beta}D^4$ and 
$Y_{\alpha + \beta} = Y^{(3)}\cup_{\alpha + \beta} D^4$ (in this notation $Y=Y_{\alpha}$).  Then we have the following:
\begin{itemize}
\item[(1)] If $\beta \in \Ga(\pi_2(Y^{(3)}))\subset \pi_3(Y^{(3)})$, then $H_4(Y_{\beta}; \bZ)\cong \bZ$ with generator $[Y_{\beta}]$ 
given by the top cell.
\item[(2)] If $\beta  \in \Ga(\pi_2(Y^{(3)}))$, this also implies that $H_4(Y_{\alpha+\beta}; \bZ)\cong \bZ$ with a generator given by 
the top cell (see \cite[Lemma 4.3]{chr 12}). 

We define the map
$$
\tilde{A}\colon \Ga(\pi_2(Y^{(3)})) \to \Hom_{\La}(H^2(Y^{(3)}; \La), H_2(Y^{(3)}; \La))
$$ 
by
$$
\xymatrix{
 & \beta \ar[r] & \cdot \cap [Y_{\beta}]\colon H^2(Y_{\beta}; \La) \ar[r] \ar@{=}[d]& H_2(Y_{\beta}; \La)\ar@{=}[d] \\
& &  H^2(Y^{(3)}; \La) & H_2(Y^{(3)}; \La) }
$$
\noindent
Observe that we also have 
$$
\xymatrix{
\cdot \cap [Y_{\alpha+\beta}]\colon H^2(Y_{\alpha+\beta}; \La) \ar[r] \ar@{=}[d]& H_2(Y_{\alpha+\beta}; \La)\ar@{=}[d] \\
H^2(Y^{(3)}; \La) & H_2(Y^{(3)}; \La) }
$$
\noindent
It is obvious that $\cdot \cap [Y_{\beta}] = \cdot \cap [Y_{\alpha+\beta}] - \cdot \cap[Y]$.
\item[(3)] $\tilde{A}$ induces a map $A\colon \Ga(\pi_2(Y^{(3)}))\otimes_{\La}\bZ \to \Hom_{\La}(H^2(Y^{(3)}; \La), H_2(Y^{(3)}; \La))$.  
The map $A$ can be seen to be the composite map 
$$
\xymatrix{
\Ga(\pi_2(Y^{(3)}))\otimes_{\La}\bZ \ar[r]& H_2(Y^{(3)}; \La)\otimes_{\La}H_2(Y^{(3)}; \La) \ar[r]& \Hom_{\La}(H^2(Y^{(3)}; \La), H_2(Y^{(3)}; \La)).}
$$
where the first map is induced from $\Ga(\pi_2)\to \pi_2\otimes \pi_2=H_2(Y^{(3)}; \La)\otimes H_2(Y^{(3)}; \La)$ and 
the second one maps $u\otimes_{\La} v \to \{ \xi \to (\xi\cap u)v\}$.
\end{itemize} 

Take now $X=X^{(3)}\cup_{\varphi} D^4$ giving 
$$
\displaystyle
A\colon \Ga(\pi_2(X^{(3)}))\otimes_{\La}\bZ \to \Hom_{\La}(H^2(X^{(3)}; \La), H_2(X^{(3)}; \La))
$$
and similarly for $X'=X'^{(3)}\cup_{\varphi'} D'^4$ one obtains the map $A'$
$$
A'\colon \Ga(\pi_2(X'^{(3)}))\otimes_{\La}\bZ \to \Hom_{\La}(H^2(X'^{(3)}; \La), H_2(X'^{(3)}; \La)).
$$

Recall that $\overline{h}\colon X^{(3)}\to X'^{(3)}$, and 
$w=\overline{h}_*[\partial D^4] - [\partial D'^4] \in \pi_3(X')$ gives our obstruction 
$\overline{w}=w \otimes_{\La} 1 \in \Ga(\pi_2(X'^{(3)})) \otimes_{\La} \bZ$.  We have 
$\overline{h}_*[\partial D^4] = [\partial D'^4]+w$, so by Remark \ref{connection} $(2)$
$$
\displaystyle
[X'^{(3)}\cup_{\overline{h}\varphi} D^4] \in H_4(X'^{(3)}\cup_{\overline{h}\varphi} D^4; \bZ) \cong \bZ
$$
is the canonical generator and $\overline{h}\colon X^{(3)} \to X'^{(3)}$ extends to the map
$$
\tilde{h}\colon X=X^{(3)}\cup_{\varphi} D^{4} \to X'^{(3)}\cup_{\overline{h}\varphi} D^4
$$
in the obvious way.  Hence $\tilde{h}_*[X]=[X'^{(3)}\cup_{\overline{h}\varphi} D^4]$.  

Now by Remark \ref{connection} $(2)$, we have
\begin{align*}
A'(\overline{w})=\cdot \cap [X'^{(3)}\cup_w D^4] & =\cdot \cap [X'^{(3)}\cup_{\overline{h}\varphi} D^4] - 
\cdot \cap [X'^{(3)}] \\
& = \overline{h}_* \circ (\cap [X]) \circ \overline{h}^* - \cdot \cap [X'].
\end{align*}
\end{remark}

We shall now prove that $\overline{w_2} = 0$, and $\overline{w_3} = 0$.  According to the splittings 
$$
H_2(X'^{(3)}; \La)= H_2(P^{(3)}; \La) \oplus G' \ \textrm{and} \ H^2(X'^{(3)}; \La)= H^2(P^{(3)}; \La) \oplus G'^*,
$$
the map $A'$ has components 
\begin{align*}
A'_2 &\colon G' \otimes_{\La} H_2(P^{(3)}) \to \Hom_{\La}(G'^*,  H_2(P^{(3)}; \La)) \\
A'_3 &\colon \Ga(G')\otimes_{\La}\bZ \to \Hom_{\La}(G'^*,  G').
\end{align*}
Note that both maps are injective because $G'$ is stably free.  

We consider first $A'_3(\overline{w_3})$.  By our hypothesis the restriction of 
$\overline{h}^*$ to $G'^*$ is equal to 
$\Phi^*$, similarly the map $\Phi$ is the restriction of 
$\overline{h}_*$.  Moreover, the restriction of 
the cap product map $\cdot \cap [X]$ to $G^*$ is equal to the 
inverse of the adjoint 
$$
\hat{\lambda}_X \colon H_2(X^{(3)}; \La) \to \Hom_{\La}(H_2(X^{(3)}; \La), \La)
$$ 
restricted to $G$, i.e., to $(\hat{\lambda}_{X {|_{G}}})^{-1}$.  Hence 
$$
A'_3(\overline{w_3})= \Phi (\hat{\lambda}_{X {|_{G}}})^{-1} \Phi^* - (\hat{\lambda}_{X' {|_{G'}}})^{-1} .
$$
But 
$$
\xymatrix{
G\ar[d]_{\Phi} \ar[r]^{\hat{\lambda}_{X {|_{G}}}} & G^* \\
G' \ar[r]^{\hat{\lambda}_{X' {|_{G'}}}} & G'^* \ar[u]_{\Phi^*}}
$$
commutes because $\Phi$ is an isometry.  This shows that $A'_3(\overline{w_3})=0$, 
hence $\overline{w_3}=0$.

We now come to 
$$
A'_2(\overline{w_2}) \colon G'^* \to H_2(P^{(3)}; \La).
$$
For this we identify 
$$
G'^*= \coker (H^2(P^{(3)}; \La) \to H^2(X'^{(3)}; \La)) .
$$
We have the following diagram 
\eqncount
\begin{equation} \label{final}
\begin{matrix}
\xymatrix{
 & & & H_2(P^{(3)}; \La) \\
H^2(X'^{(3)}; \La) \ar[r]^{\overline{h}^*} \ar[d] & H^2(X^{(3)}; \La) \ar[r]^{\cap [X]} \ar[d] & H_2(X^{(3)}; \La)\ar[r]^{\overline{h}_*} & H_2(X'^{(3)}; \La) \ar[u]_{f_*}  \\
G'^* \ar[r]^{\Phi^*} & G^* \ar[r] & G\ar[r]^{\Phi} \ar[u] & G' \ar[u] }
\end{matrix}
\end{equation}
Commutativity of the right square follows from the hypothesis (\ref{necessary}).  Commutativity of the left square is a consequence of 
$\overline{h} \colon X^{(3)} \to X'^{(3)} $ being a map over $P^{(3)}$.  Consider the composition $f_* \circ \overline{h}_* \circ (\cap [X]) \circ \overline{h}^*$.
Its restriction to $G'^*$ is the lower row of (\ref{final}) followed by 
$$
G' \to H_2(X'^{(3)}; \La) \to H_2(P^{(3)}; \La), 
$$
hence is zero.  Moreover, note also that the following
$$
\xymatrix{
H^2(X'^{(3)}; \La) \ar[r]^{\cdot\cap [X']} \ar[d] & H_2(X'^{(3)}; \La) \ar[r] & H_2(P^{(3)}; \La) \\
G'^* \ar[r] & G' \ar[u] & } 
$$
implies that $\cdot\cap [X']$ restricted to $G'^*$ has a vanishing component in $H_2(P^{(3)}; \La)$.  
Therefore $A'_2(\overline{w_2})=0$, implying $\overline{w_2}=0$.
\end{proof}
 

\bibliographystyle{amsplain}
\providecommand{\bysame}{\leavevmode\hbox
to3em{\hrulefill}\thinspace}
\providecommand{\MR}{\relax\ifhmode\unskip\space\fi MR }
\providecommand{\MRhref}[2]{%
  \href{http://www.ams.org/mathscinet-getitem?mr=#1}{#2}
} \providecommand{\href}[2]{#2}

\end{document}